\newtheorem{thm}{Theorem}[section]
\newtheorem{cor}[thm]{Corollary}
\newtheorem{Def}[thm]{Definition}
\newtheorem{prop}[thm]{Proposition}
\newtheorem{rem}[thm]{Remark}
\newtheorem{ex}[thm]{Example}
\newcommand{\bdfn}{\begin{Def} \rm}
	\newcommand{\edfn}{\end{Def}}
\newcommand{\ra}{\rightarrow}
\newcommand{\Ra}{\Rightarrow}
\newcommand{\es}{\emptyset}
\newcommand{\ci}{\subseteq}
\newcommand{\ds}{\displaystyle}
\newcommand{\al}{\alpha}
\newcommand{\be}{\beta}
\newcommand{\e}{\varepsilon}
\newcommand{\la}{\lambda}
\newcommand{\ga}{\gamma}
\newcommand{\Ga}{\Gamma}
\newcommand{\mb}{\mathbb}
\newcommand{\mc}{\mathcal}
\newcommand{\sm}{\setminus}
\newcommand{\iy}{\infty}
\newcommand{\beqa}{\begin{eqnarray*}}
	\newcommand{\eeqa}{\end{eqnarray*}}
\newcounter{cnt1}
\newcounter{cnt2}
\newcounter{cnt3}
\newcounter{cnt4}
\newcommand{\blr}{\begin{list}{$($\roman{cnt1}$)$} {\usecounter{cnt1}
			\setlength{\topsep}{0pt} \setlength{\itemsep}{0pt}}}
	\newcommand{\blR}{\begin{list}{\Roman{cnt4}.\ } {\usecounter{cnt4}
				\setlength{\topsep}{0pt} \setlength{\itemsep}{0pt}}}
		\newcommand{\bla}{\begin{list}{$(\alph{cnt2})$} {\usecounter{cnt2}
					\setlength{\topsep}{0pt} \setlength{\itemsep}{0pt}}}
			\newcommand{\bln}{\begin{list}{$($\arabic{cnt3}$)$} {\usecounter{cnt3}
						\setlength{\topsep}{0pt} \setlength{\itemsep}{0pt}}}
				\newcommand{\el}{\end{list}}
\begin{document}

\title[Uniqueness of Hahn-Banach extension]{Uniqueness of Hahn-Banach extension and related norm-$1$ projections in dual spaces}

\author[Daptari]{Soumitra Daptari}
\address{Department of Mathematics,	Indian Institute of Technology Hyderabad,
	Kandi Campus, Telangana 502285, India}
\email{ma17resch11003@iith.ac.in}

\author[Paul]{Tanmoy Paul}
\address{Department of Mathematics,	Indian Institute of Technology Hyderabad,
	Kandi Campus, Telangana 502285, India}
\email{tanmoy@math.iith.ac.in}

\author[Rao]{T. S. S. R. K. Rao}
\address{Department of Mathematics, Ashoka University,
	Rajiv Gandhi Education City, Sonipat, Haryana 131029, India}
\email{srin@fulbrightmail.org}

\subjclass[2020]{Primary 46A22, 46B20; Secondary 46B22, 46M05}

\keywords{Hahn-Banach extension, property-$U$, property-$(SU)$, $L_1$-predual, Bochner integrable functions, tensor product spaces.}

\begin{abstract}
In this paper we study two properties viz. property-$U$ and property-$SU$ of a subspace $Y$ of a Banach space which correspond to the uniqueness of the Hahn-Banach extension of each linear functional in $Y^*$ and in addition to that this association forms a linear operator of norm-1 from $Y^*$ to $X^*$.  It is proved that, under certain geometric assumptions on $X, Y, Z$ these properties are stable with respect to the injective tensor product; $Y$ has property-$U$ ($SU$) in $Z$ if and only if $X\otimes_\e^\vee Y$ has property-$U$ ($SU$) in $X\otimes_\e^\vee Z$.
We prove that when $X^*$ has the Radon-Nikod$\acute{y}$m Property for $1<p< \infty$, $L_p(\mu, Y)$ has property-$U$ (property-$SU$) in $L_p(\mu, X)$ if and only if $Y$ is so in $X$. We show that if $Z\ci Y\ci X$, where $Y$ has property-$U$ ($SU$) in $X$ then $Y/Z$ has property-$U$ ($SU$) in $X/Z$. On the other hand $Y$ has property-$SU$ in $X$ if $Y/Z$ has property-$SU$ in $X/Z$ and $Z (\ci Y)$ is an M-ideal in $X$. It is observed that a smooth Banach space of dimension $>3$ is a Hilbert space if and only if for any two subspaces $Y, Z$ with property-$SU$ in $X$, $Y+Z$ has property-$SU$ in $X$ whenever $Y+Z$ is closed.
We characterize all hyperplanes in $c_0$ which have property-$SU$.
\end{abstract}

\maketitle

{\centering\footnotesize Dedicated to the memory of Eve Oja.\par}

\section{Introduction}

By $X$ we mean a Banach space with real scalars and a subspace $Y$ of $X$ is always assumed to be a closed subspace of $X$ except when we consider dense linear subspaces. The annihilator of $Y$ in $X^*$ is denoted by $Y^{\perp}$ and $Y^{\#}$ represents the set $\{f\in X^*:\|f\|=\|f|_Y\|\}$. $B_X$ and $S_X$ represent the closed unit ball and closed unit sphere of $X$ respectively. By a {\it hyperplane} in $X$ we mean a subspace of the form $f^{-1}(0)$, for some $f\in X^*$. The classical Hahn-Banach Theorem ensures that every $f\in Y^*$ has a norm preserving extension $\tilde{f}\in X^*$. Uniqueness of such extension depends on some geometric structures of $X^*$. The authors in \cite{F, P, T} studied this property and characterize those Banach spaces where a given subspace (all subspaces) has (have) this unique extension property. We now recall the following definitions.

\bdfn
\cite{P} A subspace $Y$ of $X$ said to have property-$U$ if every $f\in Y^*$ has unique norm preserving extension $\tilde{f}$ to $X^*$.
\edfn

Also let us recall the following definitions.

\bdfn
\bla
\item \cite{L} A subspace $Y$ of $X$ is said to be an ideal if there exists a projection $P:X^*\ra X^*$ where $\|P\|=1$ and $ker (P)=Y^\perp$.
\item \cite{O} A subspace $Y$ of $X$ is said to have property-$SU$ if $Y^\perp$ has a complementary subspace $G\ci X^*$ such that for all $f\in X^*$, $f=g+h$ $g\in G, h\in Y^\perp$ and $\|f\|>\|g\|$ whenever $h\neq 0$.
\el
\edfn

It is well known that there exists a linear operator $S: Y^*\ra X^*$ where $S(y^*)$ is a norm preserving extension of $y^*$ if and only if $Y$ is an ideal in $X$. Also recall that,
$Y$ is an ideal in $X$ if and only if $Y^{\perp\perp}$ is a range of a norm-$1$ projection in $X^{**}$(\cite{Rao}).
It is clear from the properties of a subspace $Y$ having property-$SU$ is that the canonical restriction map from $Y^\#$ to $Y^*$ is an isometry. In \cite{O} Oja observed, a subspace $Y$ has property-$SU$ in $X$ if and only if $Y$ has property-$U$ and also an ideal in $X$.

Let us recall that a subspace $Y$ is said to have $n.X.I.P.$ in $X$ if $n$ closed balls $\{B(a_i,r_i)\}_{i=1}^n$ in $X$ with centres in $Y$ and $\bigcap_{i=1}^n B(a_i,r_i)\neq\es$, then $Y\cap \bigcap_{i=1}^n B(a_i,r_i+\e)\neq\es$, for all $\e>0$. It follows from \cite{L} that,

\begin{thm}\label{T1}
	Let $Y$ be a subspace of a Banach space $X$. If $Y$ is an ideal in $X$ then $Y$ has $n.X.I.P.$, for all $n\in\mb{N}$. Suppose $Y$ has property-$U$ in $X$, if $Y$ has $n.X. I. P.$ in $X$ then $Y$ is an ideal in $X$. Here it is enough to consider $n = 3$.
\end{thm}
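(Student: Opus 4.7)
\emph{First direction: $Y$ an ideal implies $Y$ has $n.X.I.P.$ for every $n$.} By the equivalence recalled just before the statement, the ideal hypothesis yields a linear norm-$1$ Hahn--Banach extension operator $\Phi\colon Y^*\ra X^*$. The adjoint $\Phi^*\colon X^{**}\ra Y^{**}$ is a norm-$1$ operator, and because $\Phi(y^*)(y)=y^*(y)$ for $y\in Y$, one checks that $\Phi^*$ restricts to the canonical embedding on $Y\ci Y^{**}$. Given balls $\{B(a_i,r_i)\}_{i=1}^n$ with $a_i\in Y$ and a common point $x\in X$, set $y^{**}:=\Phi^*(x)\in Y^{**}$; then $\|y^{**}-a_i\|=\|\Phi^*(x-a_i)\|\le\|x-a_i\|\le r_i$ for each $i$. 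An application of the Principle of Local Reflexivity to $F:=\mathrm{span}\{y^{**},a_1,\ldots,a_n\}\ci Y^{**}$ produces, for any $\delta>0$, a linear map $T\colon F\ra Y$ that is $(1+\delta)$-bounded and fixes $F\cap Y$; then $y:=T(y^{**})\in Y$ satisfies $\|y-a_i\|\le(1+\delta)r_i<r_i+\e$ for $\delta$ small enough. This direction is essentially routine.

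\emph{Second direction: property-$U$ and $3.X.I.P.$ together imply ideal.} Property-$U$ furnishes a well-defined, homogeneous, isometric map $\phi\colon Y^*\ra X^*$ sending each $y^*$ to its unique norm-preserving extension. It is enough to prove $\phi$ is linear, for then $P:=\phi\circ(\cdot|_Y)\colon X^*\ra X^*$ is the required norm-$1$ projection with kernel $Y^{\perp}$. Since $\phi$ is already homogeneous, linearity reduces to additivity, and since $\phi(y_1^*)+\phi(y_2^*)$ is some extension of $y_1^*+y_2^*$, uniqueness of the norm-preserving extension reduces this further to the norm inequality $\|\phi(y_1^*)+\phi(y_2^*)\|\le\|y_1^*+y_2^*\|$.

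The key tool is the classical dual description of Hahn--Banach extensions, which under property-$U$ collapses to
\[ \phi(y^*)(x)=\inf_{u\in Y}\bigl\{y^*(u)+\|y^*\|\,\|x-u\|\bigr\}=\sup_{u\in Y}\bigl\{y^*(u)-\|y^*\|\,\|x-u\|\bigr\},\qquad x\in X. \]
Write $a_i:=\|y_i^*\|$ and $b:=\|y_1^*+y_2^*\|$, fix $x\in B_X$ and $\e>0$, and use the sup formula to pick $u_i\in Y$ with $y_i^*(u_i)-a_i\|x-u_i\|\ge\phi(y_i^*)(x)-\e$. The three balls $B(0,\|x\|)$, $B(u_1,\|x-u_1\|)$, $B(u_2,\|x-u_2\|)$ in $X$ have centres in $Y$ and all contain $x$, so by $3.X.I.P.$ there is $y\in Y$ with $\|y\|\le\|x\|+\e$ and $\|y-u_i\|\le\|x-u_i\|+\e$. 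Writing $y_i^*(u_i)=y_i^*(y)+y_i^*(u_i-y)$, bounding the second term by $a_i(\|x-u_i\|+\e)$, and summing over $i$ while invoking $(y_1^*+y_2^*)(y)\le b\|y\|\le b(1+\e)$ gives upper and lower bounds on $y_1^*(u_1)+y_2^*(u_2)$ in which the potentially large terms $a_i\|x-u_i\|$ cancel, leaving $\phi(y_1^*)(x)+\phi(y_2^*)(x)\le b+(b+a_1+a_2+2)\e$. Letting $\e\to0$ and taking the supremum over $x\in B_X$ finishes the proof.

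\emph{Main obstacle.} The crux is the second direction: arranging the application of the ball intersection property so that the unbounded distance terms $a_i\|x-u_i\|$ drop out. Two balls would not be enough, because one also needs to control $\|y\|$ itself in order to bound $(y_1^*+y_2^*)(y)$ by $b\|y\|$; this is exactly the role of the third ball $B(0,\|x\|)$, and explains why $n=3$ already suffices.
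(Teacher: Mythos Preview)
The paper does not supply its own proof of this theorem: it is stated as a consequence of Lima's paper \cite{L} (``It follows from \cite{L} that\ldots'') and no argument is given. So there is no in-paper proof to compare against.

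That said, your argument is correct in both directions. For the forward implication, passing to $\Phi^*\colon X^{**}\to Y^{**}$ and then invoking the Principle of Local Reflexivity is the standard device, and your bookkeeping ($\Phi^*$ fixes $Y$, the $a_i$ lie in $F\cap Y$, and $\delta$ can be chosen small relative to $\max_i r_i$) is sound. For the converse, your reduction to the norm inequality $\|\phi(y_1^*)+\phi(y_2^*)\|\le\|y_1^*+y_2^*\|$ is exactly right, and the computation with the sup formula for the unique extension combined with the $3$-ball property is clean: the cancellation of the $a_i\|x-u_i\|$ terms is the heart of the matter, and your observation that the third ball $B(0,\|x\|)$ is needed precisely to control $(y_1^*+y_2^*)(y)$ via $b\|y\|$ pinpoints why $n=3$ suffices and $n=2$ would not.

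Since the paper defers to \cite{L}, one cannot say whether your route coincides with Lima's original; his Theorem~2.1 (cited elsewhere in the paper) packages several equivalent formulations of property-$U$, and the present statement is one node in that web. Your self-contained argument is in any case a legitimate proof.
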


In this paper we discuss property-$U$ and $SU$ for various kinds of function spaces, followed by their stability in  quotient spaces. We also study these properties for subspaces of Banach spaces which are of type $L_1$-preduals.
Various examples are given at the end of the paper to illustrate the limitations of some of the results obtained here using the sequence space $c_0$ and
finite dimensional spaces.

\section{Notations and Definitions}

Let $Y$ be a subspace of $X$ and let $x\in X$, define $d(x,Y)=\inf_{y\in Y}\|x-y\|$ and $P_Y(x)=\{y\in Y:\|x-y\|=d(x,Y)\}$. Note that $P_Y(x)$ may be empty for some $x$.

\bdfn
\bla
\item $Y$ is said to be {\it proximinal} if $P_Y(x)\neq\es$ for all $x$.
\item $Y$ is said to be {\it Chebyshev} if $P_Y(x)$ is singleton for all $x$.
\el
\edfn

\cite{IS} is a standard reference for the notions defined above.

\bdfn\cite{HW}
\bla
\item $Y$ is said to be an M-summand (L-summand) in $X$ if there exists a linear projection $P:X\ra X$ such that $Y=P(X)$ and $X=P(X)\bigoplus_{\ell_\iy} (I-P)(X)$ $\left(X=P(X)\bigoplus_{\ell_1} (I-P)(X)\right)$.
\item $Y$ is said to be an M-ideal in $X$ if $Y^\perp$ is an L-summand in $X^*$.
\item $Y$ is said to be a semi M-ideal in $X$ if for any $x^*\in X^*$, we have $\|x^*\|=\|Px^*\|+\|x^*-Px^*\|$, where $P:X^*\ra X^*$ is a projection satisfying $P(\la x_1^*+P(x_2^*))=\la P (x_1^*)+P (x_2^*)$, for $x_1^*, x_2^*\in X^*$ and $\la\in \mb{R}$ ({\it quasi additivity}) (see \cite[Pg. 43]{HW}).
\el
\edfn

Note that if $Y$ is an M-ideal in $X$ and $X^*=Y^\perp\bigoplus_{\ell_1} Z$ then $Z\cong Y^\#$, where $Y^\#$ is defined in Section~1.
A Banach space $X$ is said to be an {\it M-embedded space} if $X$ is an {\it M-ideal} in $X^{**}$ under the canonical embedding. An M-embedded space $X$ satisfies many geometric properties, in particular this property is separably determined and $X^*$ has {\it Radon-Nikod$\acute{y}$m property} (see below) \cite[Pg. 126, Theorem~3.1]{HW}. $c_0(\Ga)$, for an arbitrary set $\Ga$, $K(\ell_p)$, for $1<p<\iy$ are some examples of M-embedded spaces. If $X$ is M-embedded then so is all its subspaces.

Let $(\Omega, \Sigma, \nu)$ be a positive measure space, a measurable function $f:\Omega\ra X$ is said to be $p$-th Bochner-Integrable function if $\int_\Omega \|f(t)\|^pd\nu (t)<\iy$. The corresponding $p$-th norm is defined by $\|f\|_p=\left(\int_\Omega \|f(t)\|^pd\nu (t)\right)^\frac{1}{p}$, where $1\leq p<\iy$. The corresponding Banach space consisting of all $p$-th Bochner Integrable functions is denoted by $L_p(\nu, X)$. $L_\iy(\nu, X)$ represents the space of essentially bounded, measurable functions from $\Omega$ to $X$. $L_\iy (\nu, X)$ forms a Banach space with respect to the essentially supremum norm.

Let us recall the following Definition.

\bdfn\cite{DU}\label{D2}
A Banach space $X$ is said to have Radon-Nikod$\acute{y}$m property (RNP in short) if for any probability space $(\Omega, \Sigma, \mu)$ and any $\mu$-continuous vector measure $G:\Sigma\ra X$ of bounded variation there exists $g\in L_1(\mu, X)$ such that $G(E)=\int_E g(t)d\mu (t)$ for all $E\in\Sigma$.
\edfn

Numerous characterizations are available in the literature for this property. In some special cases the Banach spaces of vector valued functions can be expressed as the tensor product of classical spaces. The monographs \cite{DU, LC} are some standard references of tensor product of Banach spaces and related properties.
We follow the notations from \cite{DU}, to define the {\it injective} ({\it projective}) tensor product of two Banach spaces $X, Y$, which will be denoted by $X\otimes_\e^\vee Y$ ($X\otimes_\pi^\wedge Y$).

For $z\in X\otimes Y$ and let $\sum_{i=1}^nx_i\otimes y_i$ be one such representation of $z$.
Define the following {\it cross norms} on $X\otimes Y$,

\[
\la \left(\sum_{i=1}^n x_i\otimes y_i\right)=\sup \left\{\left\|\sum_{i=1}^n\phi (x_i)y_i\right\|:\phi\in S_{X^*}\right\} \mbox{ ~and also, }
\]
\[
\ga (z)= \inf \left\{\sum_{i=1}^n \|x_i\|\|y_i\|:z \approx \sum_{i=1}^n x_i\otimes y_i\right\}.
\]

The completion of $(X\otimes Y, \la)$ ($(X\otimes Y, \ga)$) in $\mc{B}(X\times Y)^*$ is said to be the injective (projective) tensor product of $X$ and $Y$ and is denoted by $X\otimes_\e^\vee Y$ ($X\otimes_\pi^\wedge Y$).

It is well-known that (see \cite[Pg. 114]{RR1}) if one of the spaces $X^*, Y^*$ has the RNP and one of them has {\it Approximation property} then $\left(X\otimes_\e^\vee Y\right)^*\cong X^*\otimes_\pi^\wedge Y^*$ and also $L_p(\nu, X)^*\cong L_q(\nu, X^*)$, where $\frac{1}{p}+\frac{1}{q}=1$, where $1\leq p<\iy$, $q=\iy$ when $p=1$. Also $L_1(\mu, X)\cong L_1(\mu)\otimes_\pi^\wedge X$ and $C(K, X)\cong C(K)\otimes_\e^\vee X$, where $X$ is a Banach space and $K$ is a compact Hausdorff space. Here $C(K,X):=\{f:K\ra X: f\mbox{~is continuous on~}K\}$ and the corresponding Banach space norm is $\|f\|_\iy=\sup_K\|f(k)\|$ (see \cite{LC}).

\bdfn
A Banach space $X$ is said to be an $L_1$-predual space if $X^*$ is isometrically isomorphic to $L_1(\mu)$ for some measure space $(\Omega,\Sigma,\mu)$.
\edfn

We refer Chapter~6 and 7 from the monograph \cite{HE} by Lacey for characterizations of these spaces and their properties. It is well-known that $C(K)$ is an $L_1$-predual where $K$ is compact Hausdorff. An $L_1$-predual space and its dual always have the approximation property.

If $B$ is a closed, bounded, convex  subset of a normed space, then $ext (B):=\{b\in B:b=\frac{x_1+x_2}{2} \mbox{~for some~}x_1, x_2\in B \mbox{~then~}b=x_1=x_2 \}$, the set of all {\it extreme points} of $B$.

\section{Results on Banach spaces of vector valued functions}

In this section we explore properties-$U$ and $SU$ in spaces of Bochner Integrable functions and Banach spaces of vector valued continuous functions. Both these spaces can be interpreted as tensor products over suitable Banach spaces. Theorem~\ref{T8}, \ref{T7} are the main observations in this section.
We first prove a formula for the quotient of the spaces of Bochner Integrable functions.

\begin{prop}\label{P5}
	Let $(\Omega, {\Sigma}, \mu)$ be a probability space. Let $X$ be a Banach space and $Y$ be a subspace of $X$. Fix $1\leq p<\iy$.
	Then $L_p(\mu, X)/L_p(\mu, Y)$ is isometric to $L_p(\mu, X/Y)$.
\end{prop}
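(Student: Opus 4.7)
The plan is to exhibit the canonical quotient-induced map and show it is a surjective isometry. Let $q : X \to X/Y$ denote the canonical quotient map and define $T : L_p(\mu,X) \to L_p(\mu, X/Y)$ by $(Tf)(\omega) = q(f(\omega))$. First I would check that $T$ is a well-defined contraction: on simple functions it is clear, and the pointwise inequality $\|q(f(\omega))\| \le \|f(\omega)\|$ gives $\|Tf\|_p \le \|f\|_p$ on all of $L_p(\mu,X)$. Next, since $Y$ is closed, $(Tf)(\omega)=0$ a.e.\ if and only if $f(\omega) \in Y$ a.e., so $\ker T = L_p(\mu, Y)$ and $T$ factors through a contractive linear map
$$\bar T : L_p(\mu,X)/L_p(\mu,Y) \longrightarrow L_p(\mu, X/Y).$$
The remaining task is to show $\bar T$ is both isometric and surjective.

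For the isometric part, the easy direction $\|\bar T[f]\|_p \le \|[f]\|$ follows from the pointwise estimate $\|q(f(\omega)-h(\omega))\|=\|q(f(\omega))\|$ for any $h \in L_p(\mu,Y)$, giving $\|Tf\|_p \le \|f-h\|_p$. For the reverse inequality, given $f \in L_p(\mu,X)$ and $\varepsilon>0$, I would first treat the case of a simple function $f_0 = \sum_{i=1}^n x_i \chi_{E_i}$ (with the $E_i$ pairwise disjoint) by choosing, for each $i$, some $y_i \in Y$ with $\|x_i - y_i\| \le \|x_i + Y\| + \varepsilon$ and setting $h_0 = \sum_i y_i \chi_{E_i} \in L_p(\mu,Y)$. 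Using $\mu(\Omega)=1$ and Minkowski's inequality in $L_p(\mu)$,
$$\|f_0 - h_0\|_p \le \bigl\|\,\|q\circ f_0\| + \varepsilon\,\bigr\|_p \le \|Tf_0\|_p + \varepsilon.$$
Then I pass to a general $f$ by approximating it in $L_p(\mu,X)$ by such simple functions and using that $T$ is contractive; a triangle-inequality chase yields $h \in L_p(\mu,Y)$ with $\|f-h\|_p \le \|Tf\|_p + 3\varepsilon$, giving $\|[f]\| \le \|Tf\|_p = \|\bar T[f]\|_p$.

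For surjectivity I would show the range of $T$ is dense: given any simple $g = \sum_i \bar x_i \chi_{E_i} \in L_p(\mu, X/Y)$, choose $x_i \in X$ with $q(x_i)=\bar x_i$ and $\|x_i\| \le \|\bar x_i\| + \varepsilon$, so that $f := \sum_i x_i \chi_{E_i}$ lies in $L_p(\mu,X)$ and satisfies $Tf = g$. Since simple $X/Y$-valued functions are dense in $L_p(\mu, X/Y)$, this shows $T$ (and hence $\bar T$) has dense range. Combined with the already established fact that $\bar T$ is an isometry, its range is closed, so $\bar T$ is onto, completing the proof.

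The only point of real substance is the measurable near-selection of best approximants from $Y$ needed in the isometry step; reducing to simple functions circumvents any direct appeal to a measurable selection theorem and is, I expect, the cleanest route. The use of $\mu(\Omega)=1$ to absorb the constant error $\varepsilon$ inside the $L_p$-norm is what makes the probability-space hypothesis convenient, though it is not essential.
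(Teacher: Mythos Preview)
Your argument is correct. The induced map $\bar T$ is shown to be an isometry by the simple-function reduction, and surjectivity then follows from density of simple functions together with closedness of an isometric range; the triangle-inequality chase you describe for general $f$ works exactly as stated.

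The paper takes a genuinely different route for the nontrivial inequality and for surjectivity. Instead of reducing to simple functions, it invokes the Bartle--Graves theorem to produce a continuous (not linear) section $\rho : X/Y \to X$ with $\rho(\bar x) \in \bar x$ and $\|\rho(\bar x)\| \le (1+\varepsilon)\|\bar x\|$. Composition with $\rho$ then lifts an arbitrary $g \in L_p(\mu, X/Y)$ to $\rho \circ g \in L_p(\mu, X)$ in one stroke, giving surjectivity directly, and the norm bound on $\rho$ immediately yields that $\Phi$ is a quotient map. Your approach is more elementary in that it avoids any selection theorem, at the price of an approximation argument; the paper's approach is shorter but imports a nontrivial tool. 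Your closing remark correctly identifies the trade-off: the measurable near-selection issue is exactly what Bartle--Graves handles globally, while your simple-function reduction sidesteps it at the cost of a density/closed-range step.
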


\begin{proof}
	Let $\pi:X\ra X/Y$ denote the quotient map.
	
	Define $\Phi: L_p(\mu, X) \rightarrow L_p(\mu, X/Y)$ by $\Phi(f) = \pi \circ f$.
	
	$\Phi$ is a bounded linear map and $ker(\Phi) = L_p(\mu, Y)$, which follows from $\int \|\pi(f(w))\|^p d\mu(w) \leq \int \|f(w)\|^p d\mu(w)$.
	
	It remains to prove that for any $f\in L_p(\mu, X)$, $\|f+L_p(\mu, Y)\|=\|\Phi (f)\|$. It is clear that for any $g\in L_p(\mu, Y)$, $\|f+g\|_p\geq \|\Phi (f)\|_p$, we now prove the other inequality.
	
	Using Bartle-Graves theorem (see \cite[Pg. 184]{HR}) we have a continuous cross section map $ \rho: X/Y \rightarrow X$, such that $\rho(\pi(x)) \in \pi(x)$.
	For $g \in L_p(\mu, X/Y)$, we now see that $\rho \circ g \in L_p(\mu,X)$ and   $\Phi(\rho \circ g) = \pi \circ (\rho \circ g) = g$.
	Thus $\Phi$ is onto.
	
	For any $\e >0$, by taking $\la = 1+ \e$ (as stated in \cite[Pg. 184]{HR})  we see that $\Phi$ is a quotient map, or in other words $\|f+L_p(\mu, Y)\|=\|\Phi (f)\|$
	i.e., $L_p(\mu,X)/L_p(\mu,Y)$ is isometric to $L_p(\mu, X/Y)$.
\end{proof}

For the remaining part we may assume without loss of generality that $\mu$ is a purely non-atomic measure.

\begin{prop}\label{P4}
	Let $(\Omega, {\Sigma}, \mu)$ be a probability space. Let $X$ be a Banach space and $Y$ be a subspace of $X$. Suppose $p, q\in (1,\iy)$ with $\frac{1}{p}+\frac{1}{q}=1$ and if $p=1$ then $q=\infty$.
	\bla
	\item If $X^*$ has the RNP then $L_p(\mu, Y)^\bot = L_q(\mu, Y^\bot)$.
	\item If $L_p(\mu, Y)^\bot = L_q(\mu, Y^\bot)$ then $Y^\perp$ has the RNP.
	\el
\end{prop}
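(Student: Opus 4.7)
My plan is to route both parts through the single identification $L_p(\mu, Y)^\perp \cong L_p(\mu, X/Y)^*$ and then invoke the classical representation theorem for duals of Bochner $L_p$-spaces. First I would apply Proposition~\ref{P5} to obtain the isometric isomorphism $L_p(\mu, X)/L_p(\mu, Y) \cong L_p(\mu, X/Y)$, and then dualize, using the standard identification of the dual of a quotient with the annihilator of the subspace. This yields the canonical isometric isomorphism
\[
L_p(\mu, Y)^\perp \;\cong\; L_p(\mu, X/Y)^*,
\]
implemented by the usual integral pairing. Combined with the canonical isometric identification $(X/Y)^* \cong Y^\perp \ci X^*$, this also gives $L_q(\mu, Y^\perp) = L_q(\mu, (X/Y)^*)$.

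For part (a), since the RNP passes to closed subspaces, the hypothesis that $X^*$ has the RNP forces $Y^\perp \cong (X/Y)^*$ to have the RNP as well. The representation theorem cited in the introduction (i.e.\ $L_p(\mu, Z)^* \cong L_q(\mu, Z^*)$ whenever $Z^*$ has the RNP) applied to $Z = X/Y$ then gives $L_p(\mu, X/Y)^* \cong L_q(\mu, (X/Y)^*)$ via the natural pairing. Composing with the isomorphism of the first paragraph produces the desired equality $L_p(\mu, Y)^\perp = L_q(\mu, Y^\perp)$.

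For part (b), the hypothesis --- each side viewed inside $L_p(\mu, X)^*$ via the natural action --- transports, under the canonical isomorphism of the first paragraph, into the equality
\[
L_p(\mu, X/Y)^* \;=\; L_q(\mu, (X/Y)^*).
\]
The converse of the representation theorem --- that natural equality of these two spaces forces the dual to have the RNP, which is available precisely because, by the remark preceding the proposition, one may assume $\mu$ is purely non-atomic --- then yields that $(X/Y)^* \cong Y^\perp$ has the Radon--Nikod\'ym property.

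The main point requiring care --- though not a genuine obstacle --- is confirming that every isomorphism in the chain agrees with the canonical one arising from the pairing $\langle g, f\rangle = \int \langle g(t), f(t)\rangle\, d\mu(t)$. Only then does the hypothesis of (b) genuinely realize $L_p(\mu, X/Y)^*$ as $L_q$ of its dual in the sense needed to invoke the converse direction, and only then does the chain in (a) descend to an honest equality of subspaces of $L_p(\mu,X)^*$ rather than a merely abstract isomorphism.
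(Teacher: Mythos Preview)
Your proof is correct, and for part~(b) it coincides with the paper's argument essentially verbatim: both identify $L_p(\mu,Y)^\perp$ with $L_p(\mu,X/Y)^*$ via Proposition~\ref{P5}, rewrite the hypothesis as $L_p(\mu,X/Y)^* = L_q(\mu,(X/Y)^*)$, and invoke the converse direction of \cite[Theorem~1, Pg.~98]{DU} (using the standing assumption that $\mu$ is non-atomic) to conclude that $(X/Y)^* = Y^\perp$ has the RNP.

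For part~(a), however, your route genuinely differs from the paper's. You again pass through Proposition~\ref{P5} and then apply the duality theorem to the space $Z = X/Y$, tracking naturality of the pairings to turn the resulting isomorphism into an equality of subspaces. The paper instead argues directly inside $L_q(\mu,X^*)$: given $g \in L_p(\mu,Y)^\perp \ci L_q(\mu,X^*)$, it forms the $X^*$-valued vector measure $G(E) = \int_E g\,d\mu$, observes that $G(E)(y) = \langle y\chi_E, g\rangle = 0$ for every $y \in Y$ so that $G$ is actually $Y^\perp$-valued, and then applies the RNP of $Y^\perp$ to obtain a $Y^\perp$-valued derivative which must agree with $g$ a.e. Your approach has the virtue of unifying (a) and (b) through a single identification and making the logical symmetry between the two parts transparent; the paper's approach for (a) is more self-contained (it does not rely on the Bartle--Graves theorem underlying Proposition~\ref{P5}) and delivers the equality of subspaces of $L_q(\mu,X^*)$ immediately, without the naturality bookkeeping you correctly flag in your final paragraph.
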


\begin{proof}
	$(a).~$ We always have $L_q(\mu, Y^\perp) \subseteq L_p(\mu, Y)^\perp$.
	
	To prove the other inclusion, we follow a similar arguments as stated in the proof of \cite[Theorem~1, Pg. 98]{DU}.
	
	RNP being hereditary property $Y^\bot$ has the RNP.
	
	Let $g \in (L_p(\mu, X))^ \ast = L_q(\mu,X^\ast)$. Suppose $g \in L_p(\mu, Y)^\bot$.
	Consider the vector measure $G: {\Sigma} \rightarrow X^\ast$ defined by $G(E)(x) = \int_E g(w)(x) d\mu(w)$.
	As in the proof of \cite[Theorem~1, Pg. 98]{DU}, one has that $G$ is a countably additive vector measure of bounded variation with respect to $\mu$.
	
	We now claim that $G$ is a $Y^\bot$-valued measure.
	
	Let $y \in Y$ and $E \in {\Sigma}$. Since $y\chi_E \in L_p(\mu, Y)$, by the choice of $g$ we have,
	$$\int_E g(w)(y) d\mu(w) = 0.$$
	Hence $G(E)(y) = 0$ for all $y$. Thus $G$ is a $Y^\bot$-valued measure. Since $Y^\bot$ has the RNP, $G$ has a $Y^\bot$-valued derivative $h$. But by the uniqueness of the derivative $g = h$ a.e. Thus $g \in L_q(\mu,Y^\perp)$.
	
	Identical arguments for $p=1$ and the fact $L_1 (\mu, X)^* = L_\iy (\mu, X^*)$ lead to the proof for $p=1$.
	
	$(b).~$ Now $L_q(\mu, Y^\perp)= L_q(\mu, (X/Y)^*)=L_p(\mu, Y)^\perp$.
	
	Also $\left(L_p(\mu, X)/L_p(\mu, Y)\right)^*=L_p(\mu, X/Y)^*$. Thus $L_p(\mu,X/Y)^*=L_q(\mu, Y^\perp)$. As $\mu$ is non atomic, by \cite[Theorem~1, Pg. 98]{DU}, we have that $(X/Y)^*=Y^\perp$ has the RNP.
\end{proof}

We need the following observation in the subsequent discussion.

\begin{rem}
	Let $\mu$ be probability measure then for $f\in X^*$,
	\begin{align*}
	\|\chi_\Omega f|_{L_p(\mu,Y)}\|&=\ds\sup_{g\in B_{L_p(\mu,Y)}}|\int_{\Omega}\langle g,\chi_{\Omega}f\rangle d\mu|\\
	&=\ds\sup_{g\in B_{L_p(\mu,Y)}}|\int_{\Omega}\langle g,\chi_{\Omega}f|_Y\rangle d\mu|\\
	&\leq \ds\sup_{g\in B_{L_p(\mu,Y)}}\int_{\Omega}\|f|_Y\|\|g(t)\|d\mu(t)\\
	&=\|f|_Y\|\ds\sup_{g\in B_{L_p(\mu,Y)}}\|g\|_1\leq \|f|_Y\|.
	\end{align*}
	Again let $\e>0$, there exist $y\in B_Y$ such that $f|_Y(y)\geq \|f|_Y\|-\e$ which implies $\int_{\Omega}\langle \chi_{\Omega}y,\chi_{\Omega}f\rangle d\mu\geq \int_{\Omega}\chi_{\Omega}(\|f|_Y\|-\e)d\mu=\|f|_Y\|-\e$. Therefore $\|\chi_\Omega f|_{L_p(\mu,Y)}\|\geq|\int_{\Omega}\langle\chi_{\Omega}y,\chi_{\Omega}f\rangle d\mu|\geq \int_{\Omega}\langle\chi_{\Omega}y,\chi_{\Omega}f\rangle d\mu\geq \int_{\Omega}\chi_{\Omega}(\|f|_Y\|-\e)d\mu=\|f|_Y\|-\e$ and $\e$ is arbitrary positive. So, we get $\|\chi_\Omega f|_{L_p(\mu,Y)}\|\geq\|f|_Y\|$. Hence $\|\chi_\Omega f|_{L_p(\mu,Y)}\|=\|f|_Y\|$.
\end{rem}

\begin{thm}\label{T8}
	Let $(\Omega, \Sigma, \mu)$ be a probability space. Let $Y$ be a subspace of a Banach space $X$ such that $X^*$ has RNP. Then,
	\bla
	\item $Y$ has property-$U$ in $X$ if and only if $L_p(\mu, Y)$ has property-$U$ in $L_p(\mu, X)$, for $1< p<\iy$.
	\item $Y$ has property-$SU$ in $X$ if and only if $L_p(\mu, Y)$ has property-$SU$ in $L_p(\mu, X)$, for $1< p<\iy$.
	\el
\end{thm}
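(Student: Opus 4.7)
The plan is to reduce both properties to pointwise statements in $t\in\Omega$ via the dual identifications coming from Propositions~\ref{P4} and~\ref{P5}. Since $X^*$ has the RNP we have $L_p(\mu,X)^*=L_q(\mu,X^*)$, and Proposition~\ref{P4} gives $L_p(\mu,Y)^\perp=L_q(\mu,Y^\perp)$. Combining this with Proposition~\ref{P5} applied to the pair $(X^*,Y^\perp)$ produces
\[
L_p(\mu,Y)^*\;\cong\;L_q(\mu,X^*)/L_q(\mu,Y^\perp)\;\cong\;L_q(\mu,X^*/Y^\perp)\;=\;L_q(\mu,Y^*),
\]
so elements of $L_p(\mu,Y)^*$ are literally functions $t\mapsto\phi(t)\in Y^*$. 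This dictionary, together with the identity $\|\chi_\Omega f|_{L_p(\mu,Y)}\|=\|f|_Y\|$ from the preceding Remark, underlies everything below.

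For part~(a), forward direction, take $\phi\in L_q(\mu,Y^*)$ and suppose $\Phi_1,\Phi_2\in L_q(\mu,X^*)$ are two norm-preserving extensions of $\phi$. Testing against $\chi_E y$ for $y$ in a countable dense subset of $Y$ and $E\in\Sigma$ forces $\Phi_i(t)|_Y=\phi(t)$ for a.e.\ $t$. Combining $\|\Phi_i\|_q=\|\phi\|_q$ with the pointwise bound $\|\Phi_i(t)|_Y\|\le\|\Phi_i(t)\|$, equality of $L_q$-integrals forces $\|\Phi_i(t)\|=\|\phi(t)\|$ a.e., so each $\Phi_i(t)$ is a norm-preserving extension of $\phi(t)\in Y^*$; property-$U$ of $Y$ in $X$ then collapses them to a common extension. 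For the converse, if $f_1,f_2\in X^*$ are two norm-preserving extensions of some $y^*\in Y^*$, the constants $\chi_\Omega f_1,\chi_\Omega f_2\in L_q(\mu,X^*)$ are norm-preserving extensions of the same functional $\chi_\Omega y^*\in L_p(\mu,Y)^*$ (using $\mu(\Omega)=1$ and the Remark), so property-$U$ of $L_p(\mu,Y)$ in $L_p(\mu,X)$ forces $f_1=f_2$.

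For part~(b) I would invoke Oja's equivalence property-$SU$ $\Leftrightarrow$ property-$U$ plus the ideal property. Forward direction: if $P_0\colon X^*\to X^*$ is a norm-$1$ projection with $\ker P_0=Y^\perp$, then $(PF)(t):=P_0(F(t))$ defines a norm-$1$ projection on $L_q(\mu,X^*)$ with kernel $L_q(\mu,Y^\perp)=L_p(\mu,Y)^\perp$, making $L_p(\mu,Y)$ an ideal, and combining with property-$U$ from~(a) yields property-$SU$. Converse: property-$U$ of $Y$ in $X$ again follows from~(a), so by Theorem~\ref{T1} it suffices to establish the $3.X.I.P.$\ for $Y$ in $X$. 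Given three balls $B(y_i,r_i)\subset X$ with $y_i\in Y$ meeting at some $x\in X$, the lifted balls $B(\chi_\Omega y_i,r_i)\subset L_p(\mu,X)$ meet at $\chi_\Omega x$; since $L_p(\mu,Y)$ is an ideal (hence satisfies the $3.X.I.P.$\ by the other half of Theorem~\ref{T1}), there is $g\in L_p(\mu,Y)$ with $\|g-\chi_\Omega y_i\|_p\le r_i+\e$, and then $y:=\int_\Omega g\,d\mu\in Y$ satisfies $\|y-y_i\|\le\|g-\chi_\Omega y_i\|_1\le\|g-\chi_\Omega y_i\|_p\le r_i+\e$ by Jensen's inequality.

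The main technical point is the pointwise reduction $\Phi_i(t)|_Y=\phi(t)$ a.e.\ in the forward direction of~(a), which rests on Bochner measurability together with a countable separability argument in $Y$; this is the only step that requires non-trivial measure-theoretic bookkeeping, and is where I expect the main obstacle. Everything else is either a straightforward pointwise lift of a scalar statement or a direct appeal to Theorem~\ref{T1}.
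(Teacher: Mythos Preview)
Your argument is correct. The overall architecture matches the paper's: both proofs reduce the forward direction of~(a) to a pointwise statement (the paper by quoting the distance formula $d(\varphi,L_q(\mu,Y^\perp))=\|d(\varphi(\cdot),Y^\perp)\|_q$ from \cite[Lemma~2.10]{LC}, you by directly verifying that each $\Phi_i(t)$ is a Hahn--Banach extension of $\phi(t)$), and both handle the converse of~(a) and the forward direction of~(b) exactly as you do, via constant functions $\chi_\Omega f$ and the pointwise projection $\tilde P(g)=P\circ g$.

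The one place you genuinely diverge is the converse of~(b). The paper stays parallel to its converse of~(a): it invokes Oja's criterion that property-$SU$ is equivalent to ``$f_1,f_2,f_3\in Y^\#$ and $f_1+f_2+f_3\in Y^\perp$ imply $f_1+f_2+f_3=0$'', lifts the $f_i$ to $\chi_\Omega f_i\in L_p(\mu,Y)^\#$, and concludes immediately. Your route through Theorem~\ref{T1} and the $3.X.I.P.$, pulling back via the barycenter $y=\int_\Omega g\,d\mu$ and Jensen, is a legitimate alternative; it trades the algebraic three-functional criterion for a geometric intersection-of-balls argument. The paper's route is shorter and more uniform with part~(a), while yours has the mild advantage of not needing to cite Oja's specific equivalence~(1)$\Leftrightarrow$(6) and of making the passage from $L_p(\mu,Y)$ back to $Y$ completely explicit.

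One small remark on your forward~(a): the step ``$\Phi_i(t)|_Y=\phi(t)$ a.e.'' does not actually require separability of $Y$. Since $\Phi_i(\cdot)|_Y-\phi(\cdot)$ is Bochner measurable into $Y^*$ and $\int_E(\Phi_i(t)|_Y-\phi(t))\,d\mu(t)=0$ for every $E\in\Sigma$ (testing against $\chi_E y$ for each $y\in Y$ shows the integral vanishes as an element of $Y^*$), the function is zero a.e.\ directly; no countable dense set is needed.
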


\begin{proof}
	$(a).$ Let us assume that $Y$ has property-$U$ in $X$. It remains to show that $L_p(\mu, Y)^\perp$ is Chebyshev in $L_p(\mu, X)^*$. Being a $w^*$-closed subspace of a dual space, $L_p(\mu, Y)^\perp$ is proximinal. Now for any $\varphi\in L_p(\mu, X)^*\cong L_q(\mu, X^*)$, $d(\varphi, L_q(\mu, Y^\perp))=\|d(\varphi (.), Y^\perp)\|_q$, see \cite[Lemma~2.10]{LC}. Hence the uniqueness of the best approximation from $\varphi$ to $L_q(\mu, Y^\perp)$ follows from the uniqueness of best approximation from $\varphi (\omega)$ to $Y^\perp$ for all $\omega\in \Omega$ a.e. $[\mu]$.
	
	For the converse we follow the equivalence of $(1)$ and $(3)$ in \cite[Theorem~2.1]{L}. Assume that $f_1,f_2\in Y^{\#}$ and $f_1+f_2\in Y^\perp$. Since  $(f_1+f_2)(y)=0$ for all $y\in Y$ we have $\chi_{\Omega}f_1+\chi_{\Omega}f_2\in L_p(\mu,Y)^\perp$. It is clear that $\chi_{\Omega}f_i\in L_q(\mu,X^*)$ and $\|\chi_{\Omega}f_i|_{L_p(\mu,Y)}\|=\|\chi_{\Omega}f_i\|=\|f_i\|$ for $i=1,2$. Therefore $\chi_{\Omega}f_1,\chi_{\Omega}f_2\in L_p(\mu,Y)^{\#}$. As $L_p(\mu,Y)$ has property-$U$ in $L_p(\mu,X)$ we have $\chi_{\Omega}f_1+\chi_{\Omega}f_2=0$, hence $f_1+f_2=0$.
	
	$(b).$ Let us recall the characterizations for property-$SU$ in \cite{O} (as stated in Section~ 1).
	Let us assume that $Y$ has property-$SU$ in $X$. It remains to prove that $L_q(\mu, Y^\perp)$ is an ideal in $L_q(\mu, X^*)$. This follows from Proposition~\ref{P4} and the easy observation $\tilde{P}:L_q(\mu, X^*)\ra L_q(\mu, X^*)$ defined by $\tilde{P}(g)=P\circ g$, where $P:X^*\ra X^*$ is a contractive projection with $ker (P)=Y^\perp$.
	
	For the converse we follow the equivalence of $(1)$ and $(6)$ in \cite[Pg. 1]{O}. Assume that $f_1,f_2,f_3\in Y^{\#}$ and $f_1+f_2+f_3\in Y^\perp$. If $L_p(\mu,Y)$ has property-$SU$ in $L_p(\mu,X)$ using similar arguments as stated above we can show $f_1+f_2+f_3=0$, hence $Y$ has property-$SU$ in $X$.
\end{proof}

\begin{rem}
	\bla
	\item Note that the converse of both the statements in Theorem~\ref{T8} does not require the condition that the space $X^*$  satisfy RNP.
	\item We also note that the conclusion in Theorem~\ref{T8} may not hold true for $p=1$.
	\el
\end{rem}

We now come to  the spaces of type $C(K,X)$, where $X$ is a Banach space and $K$ is a compact Hausdorff space. Let us recall $C(K,X)\cong C(K)\otimes_{\e}^{\vee} X$, the injective tensor product of $C(K)$ and $X$, as stated in Section~2.

\begin{thm}\label{T7}
	Let $X$ be an $L_1$-predual space and $Z$ be a Banach spaces such that $Z^*$ has RNP. Let $Y$ be a subspace of $Z$. Then, \bla
	\item $Y$ have property-$U$ in $Z$ if and only if $X\otimes_\e^\vee Y$ has property-$U$ in $X\otimes_\e^\vee Z$.
	\item $Y$ have property-$SU$ in $Z$ if and only if $X\otimes_\e^\vee Y$ has property-$SU$ in $X\otimes_\e^\vee Z$.
	\el
\end{thm}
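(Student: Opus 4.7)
The plan is to mirror the proof of Theorem~\ref{T8}, replacing Bochner spaces by injective tensor products. Since $X$ is an $L_1$-predual, $X^*\cong L_1(\mu)$ for some measure $\mu$, and $X$ has the approximation property. Together with the hypothesis that $Z^*$ has RNP, the duality recalled in Section~2 gives $(X\otimes_\e^\vee Z)^*\cong X^*\otimes_\pi^\wedge Z^*\cong L_1(\mu,Z^*)$, the last being the standard identification $L_1(\mu)\otimes_\pi^\wedge Z^*\cong L_1(\mu,Z^*)$. The first step is to establish the tensor-product analog of Proposition~\ref{P4}(a): under this identification, $(X\otimes_\e^\vee Y)^\perp$ corresponds to $L_1(\mu,Y^\perp)$. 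The inclusion $L_1(\mu,Y^\perp)\ci(X\otimes_\e^\vee Y)^\perp$ is immediate, while the reverse follows from an almost verbatim repetition of the Radon--Nikod\'ym argument in Proposition~\ref{P4}(a), using that $Y^\perp$ inherits RNP from $Z^*$.

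For the forward directions of (a) and (b), I would follow the $Y^\#$-characterization strategy used in Theorem~\ref{T8}, invoking the equivalences $(1)\Leftrightarrow(3)$ in \cite[Theorem~2.1]{L} and $(1)\Leftrightarrow(6)$ in \cite{O}. Assume $Y$ has property-$U$ (resp.\ $SU$) in $Z$ and take $F_1,\ldots,F_n\in(X\otimes_\e^\vee Y)^\#$, with $n=2$ (resp.\ $n=3$), whose sum lies in $(X\otimes_\e^\vee Y)^\perp$. Viewing each $F_i$ as $\widetilde{F_i}\in L_1(\mu,Z^*)$, the duality pairing gives the integral bound $\|F_i|_{X\otimes_\e^\vee Y}\|\leq\int_\Omega\|\widetilde{F_i}(\omega)|_Y\|\,d\mu(\omega)$, so the equalities $\|F_i\|=\int_\Omega\|\widetilde{F_i}(\omega)\|\,d\mu(\omega)$ and $\|F_i\|=\|F_i|_{X\otimes_\e^\vee Y}\|$ force $\widetilde{F_i}(\omega)\in Y^\#$ a.e. By the annihilator identification, $\sum_i\widetilde{F_i}(\omega)\in Y^\perp$ a.e., so property-$U$ (resp.\ $SU$) of $Y$ in $Z$ yields $\sum_i\widetilde{F_i}(\omega)=0$ a.e., and hence $\sum_iF_i=0$.

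For the reverse directions, given $f_1,\ldots,f_n\in Y^\#$ with $\sum_i f_i\in Y^\perp$, fix any $x^*\in S_{X^*}$ and set $F_i:=x^*\otimes f_i\in X^*\otimes_\pi^\wedge Z^*$. A short computation gives $\|F_i\|=\|f_i\|=\|f_i|_Y\|=\|F_i|_{X\otimes_\e^\vee Y}\|$, so $F_i\in(X\otimes_\e^\vee Y)^\#$, and $\sum_iF_i=x^*\otimes\sum_if_i$ clearly annihilates $X\otimes_\e^\vee Y$. The hypothesis on $X\otimes_\e^\vee Y$ inside $X\otimes_\e^\vee Z$ then forces $x^*\otimes\sum_if_i=0$, and since $x^*\ne 0$, $\sum_if_i=0$.

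The main obstacle is the measure-theoretic content of the forward direction: establishing the annihilator identification $(X\otimes_\e^\vee Y)^\perp\cong L_1(\mu,Y^\perp)$ and the key inequality $\|F|_{X\otimes_\e^\vee Y}\|\leq\int_\Omega\|\widetilde{F}(\omega)|_Y\|\,d\mu(\omega)$, together with combining the a.e. statements indexed by $y\in Y$ into a single null set. The last point is handled by the Pettis essential separability of elements of $L_1(\mu,Z^*)$, which lets one restrict to a countable dense subset of the relevant separable range.
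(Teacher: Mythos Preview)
Your reverse directions match the paper almost exactly (the paper does the converse of (a) by contraposition---two distinct extensions of some $f\in Y^*$ tensor with a fixed $w\in S_{X^*}$---but this is equivalent to your $n=2$ formulation, and your converse of (b) is verbatim the paper's). The forward directions, however, follow a genuinely different route.

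For the annihilator identification, the paper does not redo the Radon--Nikod\'ym argument of Proposition~\ref{P4}(a). Instead it invokes \cite[Corollary~18]{MR}, valid because $X$ is an $L_1$-predual, to get $(X\otimes_\e^\vee Z)/(X\otimes_\e^\vee Y)\cong X\otimes_\e^\vee(Z/Y)$, and then dualizes: $(X\otimes_\e^\vee Y)^\perp\cong X^*\otimes_\pi^\wedge Y^\perp\cong L_1(\mu,Y^\perp)$. Your ``almost verbatim'' claim hides a real adaptation: the step in Proposition~\ref{P4}(a) uses that $y\chi_E\in L_p(\mu,Y)$, whose analog here would be $\chi_E\otimes y$, but $\chi_E$ lives in $L_\infty(\mu)=X^{**}$, not in $X$. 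The fix is to use weak$^*$-density of $\widehat{X}$ in $L_\infty(\mu)$ (Goldstine) together with weak$^*$-continuity of $\phi\mapsto\int\phi(\omega)g(\omega)(y)\,d\mu(\omega)$; this works, but it is not the paper's argument.

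For the forward implication in (a) the paper uses the Chebyshev characterisation of property-$U$: once $(X\otimes_\e^\vee Y)^\perp=L_1(\mu,Y^\perp)$, proximinality is automatic (weak$^*$-closed), and uniqueness of best approximations follows pointwise exactly as in Theorem~\ref{T8}(a) via the distance formula of \cite[Lemma~2.10]{LC}. For (b) forward, the paper simply cites the stability of ideals under injective tensor products \cite[Lemma~2]{RA} and combines it with (a). Your approach instead runs the $Y^\#$-criterion for both, which is more uniform and self-contained but requires the pointwise inequality $\|F|_{X\otimes_\e^\vee Y}\|\le\int\|\widetilde F(\omega)|_Y\|\,d\mu(\omega)$. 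Note that once you have the annihilator identification, this inequality is in fact the equality $d(\widetilde F,L_1(\mu,Y^\perp))=\int d(\widetilde F(\omega),Y^\perp)\,d\mu$ from \cite[Lemma~2.10]{LC}, so your ``main obstacle'' dissolves into the same citation the paper uses. The upshot: the paper's route is shorter because it offloads the work to \cite{MR}, \cite{LC}, and \cite{RA}; yours is more hands-on and treats $U$ and $SU$ symmetrically, at the cost of the measure-theoretic bookkeeping you correctly flag.
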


\begin{proof}
	$(a).$ Let $Y$ have property-$U$ in $Z$.
	
	We now show that, $\left(X \otimes_{\e}^{\vee} Y\right)^\perp$ is Chebyshev in $(X\otimes_{\e}^{\vee}Z)^*$.
	
	Since $Z^*$ has the RNP and $X^*$ has approximation property (see \cite[Pg. 73]{RR1}), $(X\otimes_{\e}^{\vee}Z)^*\cong X^*\otimes_{\pi}^{\wedge} Z^*$. On the other hand $(X \otimes_{\e}^{\vee} Y)^\perp\cong \left(X \otimes_{\e}^{\vee} Z/X \otimes_{\e}^{\vee} Y\right)^*$. Since $X$ is an $L_1$-predual it follows from \cite[Corollary~18]{MR} that $\left(X \otimes_{\e}^{\vee} Z/X \otimes_{\e}^{\vee} Y\right)\cong X\otimes_{\e}^{\vee} (Z/Y)$ and hence $\left(X\otimes_{\e}^{\vee} Y\right)^\perp\cong X^*\otimes_{\pi}^{\wedge} Y^\perp\cong L_1(\mu)\otimes_{\pi}^{\wedge} Y^\perp \cong L_1(\mu, Y^\perp)$, the last identity follows from the fact that $X^*\cong L_1(\mu)$ for some positive measure space $(\Omega,\Sigma,\mu)$ and the properties of projective tensor product. Now $\left(X\otimes_{\e}^{\vee} Z\right)^*\cong L_1(\mu, Z^*)$. Being a $w^*$-closed subspace of $L_1(\mu, Z^*)$, $L_1(\mu, Y^\perp)$ is proximinal.
	It remains to prove that best approximations in $L_1(\mu,Y^\bot)$ are unique. This follows from similar arguments used in the proof of Theorem~\ref{T8}$(a)$.
	
	Conversely, let $Y$ does not have property-$U$ in $Z$. Let $f\in Y^*$ and $g_1$, $g_2$ be two norm preserving extensions in $Z^*$. Let $w\in S_{X^*}$, then $w\otimes g_1$, $w\otimes g_2\in X^*\otimes_\pi^\wedge Z^*(=(X\otimes_\e^\vee Z)^*)$ are two norm preserving extensions of $w\otimes f\in X^*\otimes_\pi^\wedge Y^*(=(X\otimes_\e^\vee Y)^*)$, which contradicts our assumption.
	
	$(b).$ As the ideal property is stable under injective tensor product (see \cite[Lemma 2, pg-601]{RA}), the result is true for property-$SU$.
	
	Conversely assume that $X\otimes_\e^\vee Y$ has property-$SU$ in $X\otimes_\e^\vee Z$. Again we follow the equivalence for property-$SU$ in \cite{O}.
	
	Let $f_1,f_2,f_3\in Y^\#$ such that $f_1+f_2+f_3\in Y^\perp$. It remains to show that $f_1+f_2+f_3=0$. Let $g\in S_{X^*}$ then $g\otimes f_i\in (X\otimes_\e^\vee Y)^\#$, $i=1,2,3$. In fact, for fixed $i$, $\|g\otimes f_i\|=\|g\|\|f_i\|=\|f_i\|$. Let $\e>0$, there exist $x_0\in S_X$ such that $|g(x_0)|>1-\e$. It is now clear that,
	$\|g\otimes f_i|_{X\otimes_\e^\vee Y}\|\geq (1-\e)\|f_i\|$. Hence $\|g\otimes f_i|_{X\otimes_\e^\vee Y}\|=\|g\otimes f_i\|$, $i=1,2,3$.
	
	It is now clear that, $g\otimes f_1+g\otimes f_2+g\otimes f_3\in (X\otimes_\e^\vee Y)^\perp$. In fact for simple tensor $x\otimes y$, $(g\otimes f_1+g\otimes f_2+g\otimes f_3) (x\otimes y)=0$. Hence, if $D$ be the linear span of all simple tensors in $X\otimes_\e^\vee Y$ then $(g\otimes f_1+g\otimes f_2+g\otimes f_3)|_D=0$. From the density of $D$ we have $f_1+f_2+f_3=0$.
\end{proof}

We do not know whether a similar conclusion derived in Theorem~\ref{T7} also holds for the projective tensor product of the spaces.

As an application of Theorem~\ref{T7} we have the following for the subspaces of type $C(K,Y)$ in $C(K,X)$.

\begin{cor} \label{H1}
	Let $X$ be a Banach space such that $X^*$ has RNP and $Y$ be a subspace of $X$. Then $Y$ has property-$U$ (property-$SU$) in $X$ if and only if $C(K,Y)$ has property-$U$ (property-$SU$) in $C(K,X)$.
\end{cor}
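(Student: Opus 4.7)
The plan is to deduce Corollary~\ref{H1} as a direct specialization of Theorem~\ref{T7}, since the needed structural ingredients are already in place. Recall from Section~2 that $C(K,X) \cong C(K) \otimes_\e^\vee X$ isometrically, and that $C(K)$ is an $L_1$-predual. Therefore, in the language of Theorem~\ref{T7}, I can play the role of the ambient $L_1$-predual by $C(K)$ and the role of the space denoted $Z$ there by the present $X$ (with its subspace $Y$).

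With that relabeling, Theorem~\ref{T7} requires exactly two hypotheses: that the first factor be an $L_1$-predual, which is fulfilled by $C(K)$, and that the dual of the second factor possess the RNP, which is fulfilled by hypothesis on $X^*$. So the theorem applies and yields, for property-$U$,
\[
Y \text{ has property-}U \text{ in } X \iff C(K)\otimes_\e^\vee Y \text{ has property-}U \text{ in } C(K)\otimes_\e^\vee X,
\]
and analogously for property-$SU$. Upon replacing $C(K)\otimes_\e^\vee Y$ by $C(K,Y)$ and $C(K)\otimes_\e^\vee X$ by $C(K,X)$ via the canonical isometric isomorphism, the conclusion of the corollary follows.

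There is essentially no obstacle beyond citing Theorem~\ref{T7} and invoking the isometric identification $C(K,X)\cong C(K)\otimes_\e^\vee X$; the proof is therefore extremely short. If anything needs a word of justification, it is only the observation that under this isomorphism the subspace $C(K,Y)$ of $C(K,X)$ corresponds precisely to $C(K)\otimes_\e^\vee Y$ inside $C(K)\otimes_\e^\vee X$, which is standard for the injective tensor product.
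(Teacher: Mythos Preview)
Your proof is correct and follows exactly the approach the paper intends: the corollary is stated immediately after Theorem~\ref{T7} as an application, with the identification $C(K,X)\cong C(K)\otimes_\e^\vee X$ and the fact that $C(K)$ is an $L_1$-predual (both recalled in Section~2) supplying the needed hypotheses. No separate argument beyond this specialization is given in the paper, so your proposal matches it precisely.
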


\section{Property-$(U)$ and $(SU)$ in Quotient spaces}

The aim of this section is to discuss property-$U$ and $SU$ in the quotient spaces under suitable assumptions on the underlying spaces. The first observation ensures that if a subspace of a Banach space satisfies one of these properties then it also transfers to the quotient spaces. Converse to this result is derived in Theorem~\ref{TP3} with suitable assumptions on the respective spaces.

\begin{thm}\label{H4}
	\bla
	\item Let $Z \ci Y \ci X$ be closed subspaces of $X$, where $Y$ has property-$SU$ in $X$, then $Y/Z$ has property-$SU$ in $X/Z$.
	\item Let $X$ be an $L_1$-predual. Let $Y$ be a subspace of $X$ which has property-$SU$ and $J$ be an M-ideal in $Y$. Then $Y/J$ is an $L_1$-predual.
	\el
\end{thm}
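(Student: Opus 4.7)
\textbf{For part (a),} my plan is to pull back the decomposition witnessing property-$SU$ of $Y$ in $X$ to the dual of the quotient $X/Z$. Property-$SU$ supplies a direct sum decomposition $X^* = G \oplus Y^\perp$ with associated projection $P: X^* \to X^*$ of norm one, with $\ker P = Y^\perp$, satisfying $\|f\| > \|Pf\|$ whenever $f - Pf \neq 0$. Using $Z \ci Y$ one has $Y^\perp \ci Z^\perp$, and under the canonical isometric identification $(X/Z)^* \cong Z^\perp$ the subspace $(Y/Z)^\perp$ corresponds exactly to $Y^\perp$. The key observation is that $P$ leaves $Z^\perp$ invariant: if $f \in Z^\perp$, then $f - Pf \in Y^\perp \ci Z^\perp$, whence $Pf \in Z^\perp$. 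Consequently $\tilde P := P|_{Z^\perp}$ is a contractive projection on $Z^\perp$ with kernel $Y^\perp$ and range $G \cap Z^\perp$; the strict inequality $\|f\| > \|\tilde P f\|$ for $f \in Z^\perp \setminus Y^\perp$ is inherited directly from $P$. Transporting this back across $(X/Z)^* \cong Z^\perp$ gives property-$SU$ of $Y/Z$ in $X/Z$.

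\textbf{For part (b),} my plan is to first show that $Y$ itself is an $L_1$-predual, and then to invoke the classical behaviour of M-ideals in $L_1$-preduals. Since $Y$ has property-$SU$ in $X$, $Y$ is in particular an ideal in $X$, so there is a contractive projection $P: X^* \to X^*$ with $\ker P = Y^\perp$ and range $G$ isometrically isomorphic to $Y^*$ via the restriction map $g \mapsto g|_Y$. Because $X$ is an $L_1$-predual, $X^* \cong L_1(\mu)$ for some measure $\mu$. The classical theorem of Ando on contractive projections on $L_1$-spaces asserts that the range of any contractive projection on $L_1(\mu)$ is isometric to some $L_1(\nu)$; applied to $P$, this yields $Y^* \cong G \cong L_1(\nu)$, so $Y$ is an $L_1$-predual.

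Next, since $J$ is an M-ideal in $Y$, $J^\perp$ is an $L$-summand in $Y^* \cong L_1(\nu)$. Every $L$-summand of an $L_1(\nu)$-space is a band and hence isometric to $L_1(\nu')$ for some measure $\nu'$, so from the canonical isometry $(Y/J)^* \cong J^\perp$ we conclude $(Y/J)^* \cong L_1(\nu')$. Therefore $Y/J$ is an $L_1$-predual.

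The main obstacle I anticipate is the step identifying $Y$ itself as an $L_1$-predual, which rests crucially on Ando's theorem and on the correct identification of $Y^*$ with the range of the property-$SU$ projection via the ideal structure; once $Y^* \cong L_1(\nu)$ is in hand, the remainder is purely formal manipulation of $L$-summands in $L_1$-spaces.
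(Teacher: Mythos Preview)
Your proposal is correct for both parts. For part (b) it coincides with the paper's argument: both show $Y^*\cong L_1(\nu)$ via the classical fact that the range of a contractive projection on $L_1(\mu)$ is again an $L_1$-space (the paper cites Lacey, you cite Ando), and then use that the $L$-summand $J^\perp$ in $Y^*\cong L_1(\nu)$ is again an $L_1$-space to conclude $(Y/J)^*\cong L_1$.

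For part (a) your route differs from the paper's and is more direct. The paper splits the task in two: it first checks property-$U$ of $Y/Z$ in $X/Z$ by pulling two norm-preserving extensions in $Z^\perp$ back to $Y$ and invoking property-$U$ of $Y$; it then verifies the ideal property separately by passing to the bidual, using the 1-complementation of $Y^{\perp\perp}$ in $X^{**}$ to induce a contractive projection $P':X^{**}/Z^{\perp\perp}\to Y^{\perp\perp}/Z^{\perp\perp}$. Your argument stays at the dual level and handles both aspects at once by restricting the $SU$-projection $P$ to $Z^\perp$; this is cleaner and avoids the bidual machinery. One small correction: the strict inequality $\|f\|>\|\tilde P f\|$ must be asserted for $f\in Z^\perp$ with $f-\tilde P f\neq 0$ (equivalently $f\notin G$), not for $f\in Z^\perp\setminus Y^\perp$ as you wrote; the latter set contains the nonzero elements of $G\cap Z^\perp$, on which equality holds. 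Since you stated the condition correctly earlier (``whenever $f-Pf\neq 0$''), this is just a slip, and with it fixed the argument is complete.
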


\begin{proof}
	$(a).$ We first show that $Y/Z$ has property-$U$ in $X/Z$.
	
	Consider an element $f\in Z_{Y^*}^\perp$, with two norm preserving extensions, $g, h
	\in Z_{X^*}^\perp $.
	
	Now $g|_Y= h|_Y$, since $Y$ has property-$U$ we have $g=h$.
	
	For the remaining part, let $P: X^{**} \ra Y^{\perp\perp}$ be a contractive projection.
	Define $P^\prime: X^{**}/Z^{\bot\bot} \ra Y^{\bot\bot}/Z^{\bot\bot}$ by $P^\prime(\pi(\tau))= \pi(P(\tau))$.
	For $\tau \in X^{**}$, where $\pi$ is the quotient map on either quotient space.
	It is easy to see, $P^\prime$ is a contractive projection.
	
	$(b).$ Suppose $X^*=L_1(\mu)$, for some measure $\mu$ and $P_1:X^*\to Y^\#$ be a norm-$1$ projection for some subspace $Y^\#$ of $X^*$ such that $ker(P_1)=Y^\perp$. Being an image of a contraction of $L_1(\mu)$, $Y^\#$ is isometric with $L_1(\nu)$ for some measure $\nu$ (\cite[Theorem~3, Chapter~6]{HE}). It is clear that $Y^\#$ is isomorphic with $Y^*$ and let $P_2:Y^*\to J_{Y^*}^\perp$ be an $L$-projection. Hence it follows that $J_{Y^*}^\perp$ is isometric with $L_1(\lambda)$. Now $J_{Y^*}^\perp$ is isometrically isomorphic to $(Y/J)^*$, in other words $Y/J$ is $L_1$-predual space.
\end{proof}

\begin{prop}
	Let $X$ be an $L_1$-predual. Let $Y$ be a subspace of $X$ which has property-$SU$ and $J$ be an M-ideal in $Y$. Then $J_{Y^*}^\perp$ is isometrically isomorphic with $J_{X^*}^\perp/{Y^{\perp}}$ and  $J_{X^*}^\perp/{Y^{\perp}}$ is isometric with $L_1(\upsilon)$ for some measure $\upsilon$.
\end{prop}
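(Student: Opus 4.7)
The plan is to reduce the statement to Theorem~\ref{H4}(b) by chasing natural duality identifications. The hypothesis about property-$SU$ and $J$ being an $M$-ideal in $Y$ is not needed for the first isometric identification; it is only used through Theorem~\ref{H4}(b) to obtain that $Y/J$ is an $L_1$-predual, which gives the second conclusion.

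First I would set up the standard duality dictionary. Let $q:X\to X/J$ be the quotient map. Then $q^*:(X/J)^*\to X^*$ is an isometric embedding whose image is exactly $J_{X^*}^\perp$; this gives an isometric identification $(X/J)^*\cong J_{X^*}^\perp$. Under this identification, I would check that the subspace $(Y/J)_{(X/J)^*}^\perp$ corresponds precisely to $Y^\perp\subseteq J_{X^*}^\perp$: indeed, if $\phi\in (X/J)^*$ corresponds to $f=\phi\circ q\in J_{X^*}^\perp$, then $\phi|_{Y/J}=0$ if and only if $f|_Y=0$. Note also that $Y^\perp\subseteq J_{X^*}^\perp$ because $J\ci Y$.

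Next, I would invoke the Hahn-Banach theorem applied inside $X/J$: the restriction map $(X/J)^*\to (Y/J)^*$ is a surjective quotient map (every functional on $Y/J$ has a norm-preserving extension to $X/J$) with kernel $(Y/J)^\perp$. This yields the isometric isomorphism
\[
(X/J)^*/(Y/J)^\perp \;\cong\; (Y/J)^*.
\]
Combining with the previous step transfers this to
\[
J_{X^*}^\perp/Y^\perp \;\cong\; (Y/J)^*.
\]
Finally, the standard identification $(Y/J)^*\cong J_{Y^*}^\perp$ (again obtained by composing with the quotient map $Y\to Y/J$) gives the first assertion
\[
J_{Y^*}^\perp \;\cong\; J_{X^*}^\perp/Y^\perp.
\]

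For the second assertion I would simply apply Theorem~\ref{H4}(b), which under the present hypotheses on $X$, $Y$ and $J$ states that $Y/J$ is an $L_1$-predual. By the definition of $L_1$-predual, $(Y/J)^*$ is isometric to $L_1(\upsilon)$ for some measure $\upsilon$, and through the chain of isomorphisms above this yields $J_{X^*}^\perp/Y^\perp\cong L_1(\upsilon)$. There is no serious obstacle: the argument is a bookkeeping exercise in natural dualities, with all the substantive content (the existence of the relevant $L$-projections and the identification of $Y^\#$ with $L_1(\nu)$) already packaged into Theorem~\ref{H4}(b).
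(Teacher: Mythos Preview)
Your proposal is correct, but it takes a different route from the paper's proof. The paper builds the isometry in the forward direction: it defines $\phi:J_{Y^*}^\perp\to J_{X^*}^\perp/Y^\perp$ by $\phi(f)=\tilde f+Y^\perp$, where $\tilde f$ is the \emph{unique} Hahn--Banach extension of $f$ to $X$ (here property-$SU$ is invoked to make this assignment well-defined and linear), and then checks $\|\phi(f)\|=\|\tilde f+Y^\perp\|=\|\tilde f|_Y\|=\|f\|$ together with the inverse norm computation. The $L_1(\upsilon)$ conclusion is then drawn from the preceding result, exactly as you do.

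Your argument instead runs the identification backwards through the standard duality chain $(X/J)^*\cong J_{X^*}^\perp$, $(Y/J)^\perp\leftrightarrow Y^\perp$, and the Hahn--Banach quotient formula $(X/J)^*/(Y/J)^\perp\cong (Y/J)^*\cong J_{Y^*}^\perp$. This has the merit of showing that the first isometry $J_{Y^*}^\perp\cong J_{X^*}^\perp/Y^\perp$ holds for \emph{any} triple $J\ci Y\ci X$, with property-$SU$ and the M-ideal hypothesis entering only through Theorem~\ref{H4}(b) for the second assertion. The paper's construction is more concrete (one sees the explicit map) but uses more hypothesis than necessary for the first part; your route is cleaner bookkeeping and isolates exactly where the structural assumptions are consumed.
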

\begin{proof}
	Define $\phi:J_{Y^*}^\perp\to J_{X^*}^\perp/{Y^{\perp}}$ by the following. $\phi(f)=\tilde{f}+Y^{\perp}$, where $\tilde{f}$ be the unique Hahn-Banach extension of $f$. Clearly $\|\phi(f)\|=\|\tilde{f}+Y^{\perp}\|=\|\tilde{f}_{|Y}\|=\|f\|$ and $\|\phi^{-1}(g+Y^{\perp})\|=\|g_{|Y}\|=\|g+Y^{\perp}\|$. So $J_{Y^*}^\perp$ isometrically isomorphic to $J_{X^*}^\perp/{Y^{\perp}}$. And from above $J_{X^*}^\perp/{Y^{\perp}}$ is isometric with $L_1(\upsilon)$ for some measure $\upsilon$.
\end{proof}

\begin{prop}\label{P3}
	Let $Z \ci Y \ci X$. Suppose $Y$ has property $n. X.I.P.$ in $X$
	and $Z$ is a proximal subspace of $X$. Then $Y/Z$ has $n. X/Z. I. P.$ in $X/Z$.
\end{prop}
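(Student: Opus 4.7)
The plan is to lift the intersection condition in $X/Z$ back to $X$ using proximinality of $Z$, apply the $n.X.I.P.$ of $Y$ in $X$ there, and then push the resulting point back to the quotient via the canonical map $\pi:X\to X/Z$.

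First, take $n$ closed balls $B(\bar a_i,r_i)$ in $X/Z$ with centres $\bar a_i=\pi(a_i)\in Y/Z$, $a_i\in Y$, whose intersection is nonempty. Choose $\bar x\in X/Z$ in this intersection and lift it to some $x\in X$ with $\pi(x)=\bar x$. For each $i$ we have $d(x-a_i,Z)=\|\bar x-\bar a_i\|\le r_i$. Since $Z$ is proximinal in $X$, there exists $z_i\in Z$ realising this distance, so that
\[
\|x-(a_i+z_i)\|=d(x-a_i,Z)\le r_i.
\]

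Now observe the key feature: since $Z\subseteq Y$ and $a_i\in Y$, the translated centres $a_i+z_i$ still lie in $Y$, and the point $x\in X$ belongs to the intersection $\bigcap_{i=1}^n B(a_i+z_i,r_i)$, which is therefore nonempty. Invoking the assumption that $Y$ has $n.X.I.P.$ in $X$, for any prescribed $\varepsilon>0$ we obtain $y\in Y$ with $\|y-(a_i+z_i)\|\le r_i+\varepsilon$ for every $i$.

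Finally, push down by the quotient map. Set $\bar y=\pi(y)\in Y/Z$. Since $\pi(z_i)=0$, we have $\pi(y)-\bar a_i=\pi(y-a_i-z_i)$, and the quotient norm is contractive under $\pi$, so
\[
\|\bar y-\bar a_i\|\le\|y-(a_i+z_i)\|\le r_i+\varepsilon
\]
for each $i$. Thus $\bar y\in (Y/Z)\cap\bigcap_{i=1}^n B(\bar a_i,r_i+\varepsilon)$, establishing the $n.X/Z.I.P.$ of $Y/Z$ in $X/Z$. The only nontrivial step is the proximinal lifting in the second paragraph: without proximinality of $Z$ one would only get $z_i$ with $\|x-a_i-z_i\|\le r_i+\delta$, which after applying $n.X.I.P.$ and collecting errors would still suffice by absorbing $\delta$ into $\varepsilon$; so proximinality is used essentially only to keep the argument clean, and no real obstacle is present.
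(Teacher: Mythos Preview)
Your proof is correct and follows essentially the same route as the paper's: lift the intersecting point to $X$, use proximinality of $Z$ to translate the centres within $Y$ so that the lifted point lies in all the balls, apply the $n.X.I.P.$ of $Y$, and project back via $\pi$. Your closing remark that proximinality could be dispensed with (at the cost of an extra $\delta$) is an additional observation not made in the paper, but the core argument is identical.
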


\begin{proof}
	Let $\pi$ denote the quotient map. Let $\{B(\pi(y_i),r_i)\}_{i=1}^n$ be $n$-closed balls with centers in $Y/Z$ and let $x \in X$ be such that $\|x+Z-\pi (y_i)\| = d(x-y_i,Z) \leq r_i$ for $1\leq i \leq n$. Since $Z$ is proximal in $X$, let $z_i \in Z$ be such that $\|x-y_i-z_i\| = d(x-y_i,Z) \leq r_i$ for $1 \leq i \leq n$.
	Now for the collection $\{B(y_i-z_i,r_i)\}_{i=1}^n$ of balls with centers in $Y$, as $Y$ has $n.X.I.P.$ in $X$, for $\e >0$, there is a $y_0 \in Y$
	such that $\|y_0-y_i-z_i\| \leq r_i + \e$ for $1 \leq i \leq n$.
	
	Now $\pi(y_0) \in Y/Z$ and $\|\pi(y_0)-\pi(y_i)\|= \|\pi(y_0-y_i-z_i)\| \leq \|y_0-y_i-z_i\| \leq r_i+ \e$ for $1 \leq i \leq n$. Hence $Y/Z$ has the $n. X/Z. I. P$ in $X/Z$.
\end{proof}

Let $Y$, $Z$ be subspaces of $X$ such that $Z\ci Y\ci X$, the converse of Theorem~\ref{H4} is true when we are assuming $Z$ is an $M$-ideal in $X$. Before going to proof of it we first observe the following.

\begin{prop}
	Let $Y$, $Z$ be subspaces of $X$ such that $Z\ci Y\ci X$. Then $(Y/Z)^\#=\{g\in Z^\perp_{X^*}:\|g|_Y\|=\|g\|\}$.
\end{prop}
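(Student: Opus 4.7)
\bigskip

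\noindent\textbf{Proof proposal.} The plan is to unwind the definition of $(Y/Z)^{\#}$ via the two canonical isometric identifications $(X/Z)^{*}\cong Z^\perp_{X^*}$ and $(Y/Z)^{*}\cong Z^\perp_{Y^*}$, and then observe that the restriction map on annihilators is literally the restriction map on functionals. Recall by definition that for any subspace $W$ of a Banach space $V$, $W^{\#}=\{\varphi\in V^{*}:\|\varphi\|=\|\varphi|_{W}\|\}$. Thus $(Y/Z)^{\#}$ lives inside $(X/Z)^{*}$ and we need to match it with a subset of $Z^\perp_{X^*}\ci X^{*}$.

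First I would set up the identifications explicitly. Let $\pi_{X}:X\to X/Z$ and $\pi_{Y}:Y\to Y/Z$ denote the quotient maps. The map $\Phi:(X/Z)^{*}\to Z^\perp_{X^*}$ defined by $\Phi(F)=F\circ\pi_{X}$ is an onto isometry, and similarly $\Psi:(Y/Z)^{*}\to Z^\perp_{Y^*}$ given by $\Psi(H)=H\circ\pi_{Y}$ is an onto isometry. If $g\in Z^\perp_{X^*}$, then $g|_{Y}$ vanishes on $Z$ and therefore belongs to $Z^\perp_{Y^*}$; so restriction from $X^{*}$ to $Y^{*}$ maps $Z^\perp_{X^*}$ into $Z^\perp_{Y^*}$.

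Next I would verify the compatibility $\Psi(F|_{Y/Z})=\Phi(F)|_{Y}$ for every $F\in(X/Z)^{*}$; this is a one-line check because both sides send $y\in Y$ to $F(y+Z)$. Taking norms on both sides and invoking the isometric property of $\Psi$ gives $\|F|_{Y/Z}\|=\|\Phi(F)|_{Y}\|$, while $\|F\|=\|\Phi(F)\|$ by the isometric property of $\Phi$.

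Finally, I would combine these equalities: $F\in(Y/Z)^{\#}$ is equivalent to $\|F\|=\|F|_{Y/Z}\|$, which by the two displayed identities is equivalent to $\|\Phi(F)\|=\|\Phi(F)|_{Y}\|$, i.e.\ $\Phi(F)\in\{g\in Z^\perp_{X^*}:\|g|_{Y}\|=\|g\|\}$. Since $\Phi$ is a bijection between $(X/Z)^{*}$ and $Z^\perp_{X^*}$, this proves the claimed set equality under the standard identification. There is no genuine obstacle here; the only thing to be careful about is bookkeeping with the two different annihilators $Z^\perp_{X^*}$ and $Z^\perp_{Y^*}$ and checking that the diagram ``restrict then identify = identify then restrict'' commutes.
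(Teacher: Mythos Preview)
Your proposal is correct and follows essentially the same approach as the paper: both identify $(X/Z)^{*}$ with $Z^\perp_{X^*}$ and verify that under this identification the norm of the restriction to $Y/Z$ equals the norm of the restriction to $Y$. The only cosmetic difference is that you invoke the standard isometry $(Y/Z)^{*}\cong Z^\perp_{Y^*}$ together with a commuting-square argument, whereas the paper establishes the key equality $\|\Lambda|_{Y/Z}\|=\|g|_Y\|$ by hand via two short inequalities.
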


\begin{proof}
	Let $g\in Z^\perp_{X^*}$ and $\Lambda\in (X/Z)^*(=Z^\perp_{X^*})$ is the corresponding element of $g$. Now $\|\Lambda|_{Y/Z}\|=\ds\sup_{\|y+Z\|\leq 1}|\Lambda(y+Z)|=\ds\sup_{\|y+Z\|\leq 1}|g(y)|\geq \ds\sup_{\|y\|\leq 1}|g(y)|=\|g|_Y\|$.
	Again $|\Lambda|_{Y/Z}(y+Z)|=|\Lambda(y+Z)|=|g(y)|=|g|_Y(y)|=|g|_Y(y+z)|\leq \|g|_Y\|\|y+z\|$ and this is true for all $z\in Z$, hence $|\Lambda|_{Y/Z}(y+Z)|\leq \|g|_Y\|\|y+Z\|$. Therefore $\|\Lambda|_{Y/Z}\|= \|g|_Y\|$. As $(Y/Z)^\#=\{\Lambda\in (X/Z)^*:\|\Lambda|_{Y/Z}\|=\|\Lambda\|\}$ and $\|\Lambda\|=\|g\|$ we have $(Y/Z)^\#=\{g\in Z^\perp_{X^*}:\|g|_Y\|=\|g\|\}$.
\end{proof}

We now come to the main result of this section.

\begin{thm} \label{TP3}
	Let $Y, Z$ be subspaces of $X$ such that $Z\ci Y\ci X$. Suppose $Z$ is an M-ideal in $X$ and $Y/Z$ has property-$SU$ in $X/Z$. Then $Y$ has property-$SU$ in $X$.
\end{thm}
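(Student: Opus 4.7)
The strategy is to construct the required complementary subspace for $Y^\perp$ in $X^*$ by splicing together two decompositions: the $\ell_1$-direct-sum decomposition of $X^*$ arising from $Z$ being an M-ideal in $X$, and the property-$SU$ decomposition of $Z^\perp_{X^*}$ supplied by the hypothesis that $Y/Z$ has property-$SU$ in $X/Z$.

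First I would set up the identifications. Under the canonical isometry $(X/Z)^* \cong Z^\perp_{X^*}$, the annihilator $(Y/Z)^\perp$ corresponds to $Y^\perp_{X^*}$ (using $Z \subseteq Y$), so the hypothesis furnishes a subspace $G \subseteq Z^\perp_{X^*}$ with $Z^\perp = G \oplus Y^\perp$ algebraically and satisfying the strict-inequality condition of property-$SU$ relative to this decomposition. From the M-ideal hypothesis I also have the $\ell_1$-decomposition $X^* = Z^\perp \oplus_{\ell_1} Z^\#$, so every $f \in X^*$ writes uniquely as $f = f_1 + f_2$ with $f_1 \in Z^\perp$, $f_2 \in Z^\#$, and $\|f\| = \|f_1\| + \|f_2\|$.

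I would then take $\widetilde{G} := G + Z^\# \subseteq X^*$ as the candidate complement to $Y^\perp$. Three verifications remain: (i) algebraic directness $X^* = \widetilde{G} \oplus Y^\perp$, which follows by applying the M-ideal split to $f \in X^*$, then splitting $f_1 = g_1 + h$ via property-$SU$ of $Y/Z$, and using uniqueness of the $\ell_1$-split (together with $Y^\perp \subseteq Z^\perp$) to kill the $Z^\#$-component in any element of $\widetilde{G} \cap Y^\perp$; (ii) the strict norm inequality: writing $\widetilde{g} = g_1 + f_2$, the $\ell_1$-structure yields $\|f\| = \|f_1\| + \|f_2\|$ and $\|\widetilde{g}\| = \|g_1\| + \|f_2\|$, whence $\|f\| - \|\widetilde{g}\| = \|f_1\| - \|g_1\|$, which is strictly positive whenever $h \neq 0$ by property-$SU$ of $Y/Z$ in $X/Z$.

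There is no genuine obstacle here; the argument is essentially bookkeeping once the two decompositions are lined up. The point demanding care is the identification step, ensuring that the complementary subspace $G$ from property-$SU$ of $Y/Z$ in $X/Z$, which a priori lives in $(X/Z)^*$, transports to a subspace of $Z^\perp \subseteq X^*$ that meshes with the M-ideal $\ell_1$-splitting so that norms add on both the left-hand side $\|f\|$ and on $\|\widetilde{g}\|$, making the subtraction telescope cleanly.
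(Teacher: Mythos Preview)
Your approach is correct and genuinely different from the paper's proof. You construct the complementary subspace $\widetilde{G} = G + Z^\#$ in $X^*$ directly and verify the defining strict-inequality of property-$SU$ by exploiting that the $\ell_1$-decomposition makes norms additive on both $f = f_1 + f_2$ and $\widetilde{g} = g_1 + f_2$, so that $\|f\| - \|\widetilde{g}\| = \|f_1\| - \|g_1\|$ reduces to the inequality supplied by property-$SU$ of $Y/Z$ in $X/Z$. The paper proceeds instead in two separate stages: it first establishes property-$U$ via Lima's criterion ($f_1, f_2 \in Y^\#$, $f_1 + f_2 \in Y^\perp \Rightarrow f_1 + f_2 = 0$) through a multi-case analysis, splitting each $f_i$ along $Z^\# \oplus_{\ell_1} Z^\perp$ and using that $Z$ is also an M-ideal in $Y$; it then separately proves that $Y$ is an ideal in $X$ by verifying the $3.X.I.P.$ through the three-ball intersection characterization of M-ideals. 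Your argument is markedly shorter and more transparent. The paper's route does buy one thing: its property-$U$ half goes through under the weaker hypothesis that $Z$ is merely a semi M-ideal in $X$ (as the authors note in the remark following the theorem), whereas your construction needs $Z^\#$ to be a genuine linear subspace, which the full M-ideal hypothesis guarantees.
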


\begin{proof}
	Let us consider the following cases to conclude the desired result. We first show that $Y$ has property-$U$ in $X$.
	
	Choose $f_1, f_2\in Y^\#$ with $f_1+f_2\in Y^\perp$.
	
	{\sc Case~1:} If $f_1, f_2\in Z^\#$ then as $f_1+f_2\in Z^\perp$ we have $f_1+f_2=0$.
	
	{\sc Case~2:} Let $f_1=p_1+q_1$ and $f_2=p_2+q_2$ where $p_i\in Z^\#$ and $q_i\in Z^\perp$.
	
	Now given that $X^*=Z^\# \bigoplus_{\ell_1} Z^\perp$. Hence $\|f_1+f_2\|=\|p_1+p_2\|+\|q_1+q_2\|$.
	
	{\sc Claim~2.1:} $p_1+p_2=0$.
	
	In fact for any $z\in Z$, $q_i(z)=0$ and $(f_1+f_2)(z)=0$, we have $(p_1+p_2)(z)=0$. This follows that $p_1+p_2\in Z^\perp$. Now from {\sc Case~1} it follows that $p_1+p_2=0$.
	
	{\sc Claim~2.2:} $q_1+q_2=0$.
	
	Now $0=\|(f_1+f_2)|_Y\|=\|(p_1+p_2)|_Y\|+\|(q_1+q_2)|_Y\|=\|(q_1+q_2)|_Y\|$, the second identity follows from the fact that $Z$ is an M-ideal in $Y$.
	
	Hence we have $q_1+q_2\in Y^\perp$. We now show that $q_i\in (Y/Z)^\#$.
	
	Again since $Z$ is an M-ideal in $Y$ and $\|f_i\|=\|f_i|_Y\|$, we have $\|p_i\|+\|q_i\|=\|p_i|_Y\|+\|q_i|_Y\|$, $i=1, 2$. Now as $\|p_i\|=\|p_i|_Z\|\leq \|p_i|_Y\|\leq \|p_i\|$ we have $\|q_i\|=\|q_i|_Y\|$, which in other words $q_i\in Y_{Z^\perp}^\#$.
	
	Since $Y/Z$ has property-$SU$ in $X/Z$, we have $q_1+q_2=0$ and hence $f_1+f_2=0$.
	
	{\sc Case~3:} If one of $f_1, f_2$ is in $Z^\#$.
	
	{\sc Case~3.1:} Let $f_1\in Z^\#$ and $f_2\in Z^\perp$. Then $0=\|(f_1+f_2)|_Y\|=\|f_1|_Y\|+\|f_2|_Y\|=\|f_1\|+\|f_2\|=\|f_1+f_2\|\Ra f_1+f_2=0$.
	
	{\sc Case~3.2:} Let $f_1\in Z^\#$ and $f_2=p_2+q_2$ where $p_2\in Z^\#$ and $q_2\in Z^\perp$.
	Now given that $X^*=Z^\# \bigoplus_{\ell_1} Z^\perp$. Hence $\|f_1+f_2\|=\|f_1+p_2\|+\|q_2\|$.
	
	{\sc Claim~3.2.1:} $f_1+p_2=0$.
	
	In fact for any $z\in Z$, $q_2(z)=0$ and $(f_1+f_2)(z)=0$, we have $(f_1+p_2)(z)=0$. This follows that $f_1+p_2\in Z^\perp$. Now from {\sc Case~1} it follows that $f_1+p_2=0$.
	
	{\sc Claim~3.2.2:} $q_2=0$.
	
	Now $0=\|(f_1+f_2)|_Y\|=\|(f_1+p_2)|_Y\|+\|q_2|_Y\|=\|q_2|_Y\|$, the second identity follows from the fact that $Z$ is an M-ideal in $Y$.
	
	Hence we have $q_2\in Y^\perp$. We now show that $q_2\in (Y/Z)^\#$.
	
	Again since $Z$ is an M-ideal in $Y$ and $\|f_2\|=\|f_2|_Y\|$, we have $\|p_2\|+\|q_2\|=\|p_2|_Y\|+\|q_2|_Y\|$ and $\|p_1\|=\|p_1|_Y\|$. Now as $\|p_2\|=\|p_2|_Z\|\leq \|p_2|_Y\|\leq \|p_2\|$ we have $\|q_2\|=\|q_2|_Y\|$, which in other words $q_2\in Y_{Z^\perp}^\#$.
	
	Since $Y/Z$ has property-$SU$ in $X/Z$, we have $q_2=0$ and hence $f_1+f_2=f_1+p_2+q_2=0$.
	
	It remains to prove that $Y$ is an ideal in $X$. Now by Theorem~\ref{T1}, as $Y$ has property-$U$, it is enough to check that $Y$ has $3.X.I.P.$ in $X$.
	
	Let $\{B(y_i, r)\}_{i=1}^3$ be $3$ closed balls in $X$ with centres in $Y$. Suppose $\bigcap_i B(y_i, r)\neq\es$. Choose $x\in \bigcap_i B(y_i, r)$. Then it is clear that $x+Z\in B(y_i+Z, r)$, where the balls are now taken in the quotient space $X/Z$.
	
	Now as $Y/Z$ has property-$SU$ we have $\left(\bigcap_{i=1}^3 B(y_i+Z, r+\e)\right)\cap (Y/Z)\neq\es$. Let us choose $y_0+Z\in \left(\bigcap_{i=1}^3 B(y_i+Z, r+\e)\right)\cap (Y/Z)$. In other words $\|y_i-y_0+Z\|\leq r+\e$ for $1\leq i\leq 3$. Get $z_i\in Z$ such that $\|y_i-y_0-z_i\|<r+2\e$.
	
	We now use a characterization of M-ideal stated in \cite[Pg. 18]{HW}.
	Consider the $3$ balls $\{B(y_i-y_0, r+2\e)\}_{i=1}^3$ in $X$. Each ball intersects $Z$, as $z_i\in B(y_i-y_0, r+2\e)$, and finally $x-y_0\in \bigcap_{i=1}^3 B(y_i-y_0, r+2\e)$. Because $Z$ is an M-ideal in $X$, there must exists a $z_0\in \left(\bigcap_{i=1}^3 B(y_i-y_0, r+3\e)\right)\cap Z$. Which concludes that $y_0+z_0\in \bigcap_{i=1}^3 B(y_i, r+3\e)$.
	
	As $\e$ is arbitrary the result follows.
\end{proof}

\begin{rem}
	From the proof of Theorem~\ref{TP3}, it is clear that if $Z$ is a semi M-ideal in $X$ and $Y/Z$ has property-$U$ in $X/Z$ then $Y$ has property-$U$ in $X$. In fact, if $Z$ is a semi M-ideal in $X$ then $\|x^*\|=\|Px^*\|+\|x^*-Px^*\|$, where $x^*\in X^*$ and $P:X^*\ra Y^\perp$ is a projection. This decomposition leads to that, $Z$ has property-$U$ in $X$ and hence \cite[Theorem~2.1, Pg. 99]{L} applies for $Z$. Hence all the cases in Theorem~\ref{TP3} devoted to prove $Y$ has property-$U$ in $X$ can be fitted for this case also.
\end{rem}

\section{Property-$U$ and $SU$ for spaces of type $L_1$-predual}

In this section we discuss property-$U$ and $SU$ for ideals and other subspaces of $L_1$-preduals.
An easy argument concerning property-$U$ and the $3.X.I.P.$ ensure the following.

\begin{thm}\label{T6}
	Let $X$ be a Banach space. $Y$, $Z$ are subspaces of $X$, and $\overline{Z}=X$. If $Y$ has property-$U$ ($SU$) in $\text{span}\{Y\cup \{z\}\}$, for all $z\in Z$ then $Y$ has property-$U$ ($SU$) in $X$.
\end{thm}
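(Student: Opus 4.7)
The plan is to handle property-$U$ first and then bootstrap to property-$SU$ via Oja's characterisation (property-$SU$ equals property-$U$ plus the ideal property) together with Theorem~\ref{T1}.

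For property-$U$, I would start with $f\in Y^*$ and two norm-preserving extensions $g_1,g_2\in X^*$, aiming to show $g_1=g_2$. Fix $z\in Z$ and put $Y_z=\mathrm{span}(Y\cup\{z\})$. The restriction $g_i|_{Y_z}\in Y_z^{*}$ extends $f$ and satisfies
$$\|f\|=\|g_i|_Y\|\le\|g_i|_{Y_z}\|\le\|g_i\|=\|f\|,$$
so it is itself a norm-preserving extension of $f$ to $Y_z$. The assumed property-$U$ of $Y$ in $Y_z$ then forces $g_1|_{Y_z}=g_2|_{Y_z}$, and in particular $g_1(z)=g_2(z)$. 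Since $z\in Z$ was arbitrary and $\overline{Z}=X$, continuity of the $g_i$ yields $g_1=g_2$ on $X$.

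For property-$SU$, property-$U$ of $Y$ in $X$ is already secured by the previous step, so by Theorem~\ref{T1} it suffices to verify the $3.X.I.P.$\ for $Y$ in $X$. Given three closed balls $B(y_1,r),B(y_2,r),B(y_3,r)$ in $X$ with centres in $Y$ and a common point $x\in X$, and $\varepsilon>0$, I would use $\overline{Z}=X$ to pick $z\in Z$ with $\|x-z\|<\varepsilon/2$; then $\|z-y_i\|<r+\varepsilon/2$ for $i=1,2,3$, so $z$ lies in all three balls viewed inside $Y_z=\mathrm{span}(Y\cup\{z\})$. Since $Y$ has property-$SU$, and hence is an ideal, in $Y_z$, it satisfies the $3.Y_z.I.P.$; applying this with tolerance $\varepsilon/2$ produces $y_0\in Y$ with $\|y_0-y_i\|\le r+\varepsilon$ for $1\le i\le 3$, which is exactly the $3.X.I.P.$ The combination of property-$U$ and $3.X.I.P.$ then gives property-$SU$ of $Y$ in $X$ via Theorem~\ref{T1} and Oja's characterisation.

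The argument is essentially bookkeeping. The one mildly delicate point is the double inflation of radii, where the $\varepsilon/2$ coming from the density approximation in $X$ and the $\varepsilon/2$ coming from the $3.Y_z.I.P.$\ in $Y_z$ must combine into a single $\varepsilon$; once that is arranged, both halves of Oja's characterisation transfer cleanly from the one-dimensional enlargements $Y_z$ back to $X$.
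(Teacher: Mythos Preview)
Your argument is correct and matches the paper's intent: the paper states only that ``an easy argument concerning property-$U$ and the $3.X.I.P.$ ensure'' the result, and what you have written is precisely that easy argument made explicit. One cosmetic point: the definition of $n.X.I.P.$ in the paper allows distinct radii $r_i$, not a common $r$, so in the $SU$ step you should write $B(y_i,r_i)$; your proof goes through verbatim with this change.
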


Using characterization of $L_1$-preduals discussed in \cite[Chapter~7]{HE} we now have a similar result for ideals in Theorem~\ref{T6}.

\begin{thm}\label{T30}
	Let $X$ be an $L_1$-predual space. Suppose $Z \ci X$ is a dense subspace and let $Y \ci X$ be a closed subspace such that for any $z \in Z\sm Y$, $Y \ci span\{z,Y\}$ is an ideal if and only if $Y$ is an ideal in $X$.
\end{thm}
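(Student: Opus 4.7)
The plan is to handle the two implications separately.  The forward direction is a direct restriction argument: if $Y$ is an ideal in $X$, fix a Hahn--Banach extension operator $S:Y^{\ast}\to X^{\ast}$, and for each $z\in Z\sm Y$ set $W_z:=\mathrm{span}\{z,Y\}$.  Then $T_z(y^{\ast}):=S(y^{\ast})|_{W_z}$ is linear, extends $y^{\ast}$, and the chain $\|y^{\ast}\|=\|S(y^{\ast})|_Y\|\leq\|T_z(y^{\ast})\|\leq\|S(y^{\ast})\|=\|y^{\ast}\|$ makes $T_z:Y^{\ast}\to W_z^{\ast}$ norm-preserving, so $Y$ is an ideal in $W_z$.

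For the converse, I plan to mimic the density argument used in Theorem~\ref{T6}.  The first step is to lift the $n.X.I.P.$ of $Y$ from each $W_z$ to $X$.  Given $n$ closed balls $\{B(y_i,r_i)\}_{i=1}^{n}$ with $y_i\in Y$ and $x\in\bigcap_i B(y_i,r_i)$, fix $\varepsilon>0$ and use the density of $Z$ to pick $z\in Z$ with $\|x-z\|<\varepsilon/2$.  If $z\in Y$ then $y:=z$ does the job; otherwise $z\in Z\sm Y$, so the hypothesis that $Y$ is an ideal in $W_z$ together with Theorem~\ref{T1} applied inside $W_z$ yields the $n.W_z.I.P.$ of $Y$ there.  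Since $z\in W_z$ lies in each enlarged ball $B_{W_z}(y_i,r_i+\varepsilon/2)$, there exists $y\in Y$ with $\|y-y_i\|\leq r_i+\varepsilon$ for every $i$.  Hence $Y$ has the $n.X.I.P.$ in $X$ for all $n\in\mb{N}$.

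The main obstacle is to pass from the $n.X.I.P.$ to the ideal property of $Y$ in $X$.  Theorem~\ref{T1} in general demands property-$U$ alongside the $n.X.I.P.$, and as elementary examples --- e.g.\ the constants in $C[0,1]$ --- show, ideals in $L_1$-preduals can fail property-$U$, so the naive application of Theorem~\ref{T1} is unavailable.  The plan is therefore to invoke the characterizations of ideals inside $L_1$-preduals developed in \cite[Chapter~7]{HE}, via the Choquet-simplex structure of $B_{X^{\ast}}$ (equivalently Lindenstrauss' $4.2$-intersection property), to convert the ball-intersection data from the previous step directly into a norm-one projection on $X^{\ast}$ with kernel $Y^{\perp}$.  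This is where the hypothesis that $X$ is an $L_1$-predual is essential, and this is the step I expect to require the most care.
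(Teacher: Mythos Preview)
Your proposal is correct and aligns with the paper's approach. The paper itself supplies no detailed proof of this theorem: it merely remarks that ``Using characterization of $L_1$-preduals discussed in \cite[Chapter~7]{HE} we now have a similar result for ideals'' to Theorem~\ref{T6}, and then states Theorem~\ref{T30}. Your restriction argument for the forward direction and your density argument establishing the $n.X.I.P.$ for $Y$ in $X$ are exactly the expected elaboration, and your final step---invoking the Lindenstrauss/Lacey ball-intersection characterizations of $L_1$-preduals to bypass the property-$U$ hypothesis in Theorem~\ref{T1}---is precisely what the paper's one-line hint points to. In particular, once $Y$ has the $n.X.I.P.$ in $X$ for all $n$, the $L_1$-predual property of $X$ guarantees that any finite family of pairwise intersecting balls centred in $Y$ has a common point in $X$, and the $n.X.I.P.$ then pushes this common point into $Y$; this yields the ball-intersection characterization of $L_1$-preduals for $Y$ itself, whence $Y^{**}$ is injective, i.e.\ $Y^{\perp\perp}$ is $1$-complemented in $X^{**}$, so $Y$ is an ideal in $X$.
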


Let us recall that a subspace $W$ of a Banach space $X$ is said to be 1-complemented if there exists a linear onto projection $P:X\ra W$ with $\|P\|=1$.
It is clear that if $X$ is a Banach space then $X^{\perp\perp}(\cong X^{**})$ is 1-complemented in $X^{****}$. This leads to conclude that, $Y$ is an ideal in $X$ if and only if $Y^{\perp\perp}$ is an ideal in $X^{**}$.

Now observe that for a Banach space $X$ of type $L_1$-predual or M-embedded, $X^*$ is an $L$-summand in $X^{***}$. Hence we have the following.

\begin{thm}\label{T10}
	Let $X$ be an $L_1$-predual (or an $M$-embedded) space, a finite co-dimensional subspace $Y$ of $X$ has property-$U$ if and only if $Y^{\perp\perp}$ has property-$U$ in $X^{**}$.
\end{thm}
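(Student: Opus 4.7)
The plan is to reduce property-$U$ to a Chebyshev condition on annihilators and then exploit the $L$-summand decomposition of $X^{***}$ noted immediately before the statement. Since any two norm-preserving extensions of $f\in Y^{*}$ to $X^{*}$ differ by an element of $Y^{\perp}$, $Y$ has property-$U$ in $X$ if and only if every $\tilde f\in X^{*}$ has a unique nearest point in $Y^{\perp}$; that is, $Y^{\perp}$ is Chebyshev in $X^{*}$ (existence of a nearest point is automatic since $Y^{\perp}$ is $w^{*}$-closed). Applying the same reformulation one level up, $Y^{\perp\perp}$ has property-$U$ in $X^{**}$ iff $Y^{\perp\perp\perp}$ is Chebyshev in $X^{***}$.

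The first step is to identify $Y^{\perp\perp\perp}$ with $Y^{\perp}$ inside $X^{***}$, using finite codimension. Let $n=\dim X/Y$. Then $Y^{\perp}\cong(X/Y)^{*}$ has dimension $n$, while $X^{**}/Y^{\perp\perp}\cong(X/Y)^{**}$ is also $n$-dimensional, so its dual $Y^{\perp\perp\perp}=(X^{**}/Y^{\perp\perp})^{*}$ has dimension $n$ as well. Since $Y^{\perp}$ embeds canonically into $Y^{\perp\perp\perp}$ via $X^{*}\hookrightarrow X^{***}$, a dimension count forces this inclusion to be an equality.

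By hypothesis $X^{***}=X^{*}\oplus_{\ell_{1}}X^{\perp}$, where $X^{\perp}=\{\Phi\in X^{***}:\Phi|_{X}=0\}$ is the kernel of the restriction $L$-projection $P\colon X^{***}\to X^{*}$. Any $\Phi\in X^{***}$ splits as $\Phi=\phi+\psi$ with $\phi=P\Phi\in X^{*}$, $\psi\in X^{\perp}$ and $\|\Phi\|=\|\phi\|+\|\psi\|$, so for any $g\in Y^{\perp\perp\perp}=Y^{\perp}\subseteq X^{*}$,
\[
\|\Phi-g\|=\|(\phi-g)+\psi\|=\|\phi-g\|+\|\psi\|.
\]
Consequently $g$ is a best approximation to $\Phi$ in $Y^{\perp\perp\perp}$ if and only if $g$ is a best approximation to $\phi=P\Phi$ in $Y^{\perp}$; as $\phi$ ranges over all of $X^{*}$ when $\Phi$ ranges over $X^{***}$, uniqueness of best approximations from $Y^{\perp\perp\perp}$ in $X^{***}$ is equivalent to uniqueness from $Y^{\perp}$ in $X^{*}$, which by the first paragraph gives both directions.

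The main obstacle is the identification $Y^{\perp\perp\perp}=Y^{\perp}$; the finite-codimension hypothesis is precisely what makes $Y^{\perp}$ reflexive and so collapses the triple annihilator, after which the $L$-summand norm identity reduces the equivalence of Chebyshev properties to a tautology.
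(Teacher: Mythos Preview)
Your proof is correct and follows essentially the same strategy as the paper: reduce property-$U$ to the Chebyshev property of the annihilator (Phelps), identify $Y^{\perp\perp\perp}$ with $Y^{\perp}$ via finite codimension, and then use the $L$-summand decomposition $X^{***}=X^{*}\oplus_{\ell_1}X^{\perp}$ to transfer the Chebyshev property between $X^{*}$ and $X^{***}$. The only cosmetic difference is that the paper identifies $Y^{\perp\perp\perp}=Y^{\perp}$ by writing $Y=\bigcap_i\ker f_i$ and observing $Y^{\perp\perp}=\bigcap_i\ker\widetilde{f_i}$, whereas you argue by a dimension count through $(X/Y)^{**}$; and you spell out explicitly the norm identity $\|\Phi-g\|=\|\phi-g\|+\|\psi\|$ that the paper leaves implicit in the phrase ``continues to be Chebyshev in $X^{***}$.''
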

\begin{proof}
	The result follows from the above discussion and the fact that for any finite linear functionals $(f_i)_{i=1}^n$ in a dual space $X^*$, $\left(\bigcap_i ker (f_i)\right)^{\perp\perp}=\bigcap_i ker (\tilde{f_i})$, where $\tilde{f_i}$'s are the canonical images of $f_i$'s in $X^{***}$. Hence $Y^\perp=sp\{f_i:1\leq i\leq n\}$, as a Chebyshev subspace of $X^*$, continues to be Chebyshev in $X^{***}$.
\end{proof}

Hence we have the following.

\begin{thm}\label{T2}
	Let $X$ be an $L_1$-predual (or an $M$-embedded) space and $Y$ be a finite co-dimensional subspace of $X$. Then $Y$ has property-$SU$ in $X$ if and only if $Y^{\perp\perp}$ has property-$SU$ in $X^{**}$.
\end{thm}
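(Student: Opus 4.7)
The plan is to reduce Theorem~\ref{T2} to a combination of Oja's characterization (property-$SU$ $=$ property-$U$ $+$ ideal) together with Theorem~\ref{T10} and the general biduality principle for ideals that is mentioned just before Theorem~\ref{T10}. Concretely, I would split the biconditional using the identity
\[
\text{property-}SU \iff \text{property-}U \ \wedge\ \text{ideal},
\]
which is recalled in Section~1 as Oja's observation, and handle the two pieces separately.

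For the property-$U$ half, I would invoke Theorem~\ref{T10} verbatim: under the $L_1$-predual (or $M$-embedded) hypothesis, $X^*$ is an $L$-summand in $X^{***}$, and the argument given for Theorem~\ref{T10} shows that the Chebyshev property of the finite-dimensional subspace $Y^\perp=\mathrm{sp}\{f_i:1\leq i\leq n\}$ in $X^*$ transfers to its canonical copy in $X^{***}$ (and vice versa, since $Y^\perp$ sits isometrically in $Y^{\perp\perp\perp}$ and the latter equals the annihilator of $Y^{\perp\perp}$ in $X^{***}$). Thus $Y$ has property-$U$ in $X$ if and only if $Y^{\perp\perp}$ has property-$U$ in $X^{**}$.

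For the ideal half, I would invoke the general biduality remark recorded just before Theorem~\ref{T10}: since $X^{**}$ is always $1$-complemented in $X^{****}$, a standard composition-of-projections argument yields $Y$ is an ideal in $X$ if and only if $Y^{\perp\perp}$ is an ideal in $X^{**}$. Note this step is purely formal and does \emph{not} require either the $L_1$-predual or $M$-embedded hypothesis; the hypothesis is only needed for the property-$U$ half. Concatenating these two equivalences with Oja's characterization gives the statement.

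The main obstacle, if any, is simply bookkeeping: one must be careful about which biduality identification is used when comparing $Y^{\#}$ inside $X^*$ and $(Y^{\perp\perp})^{\#}$ inside $X^{***}$, since property-$SU$ is formulated in terms of $Y^{\#}$. However, under finite codimension, $Y^\perp$ is finite-dimensional, hence reflexive, so the natural embedding $X^*\hookrightarrow X^{***}$ sends $Y^{\#}$ isometrically onto $(Y^{\perp\perp})^{\#}$, and the corresponding $\ell_1$-decomposition $X^*=Y^{\#}\bigoplus_{\ell_1}Y^\perp$ persists to an $\ell_1$-decomposition $X^{***}=(Y^{\perp\perp})^{\#}\bigoplus_{\ell_1}Y^{\perp\perp\perp}$ by virtue of the $L$-summand property of $X^*$ in $X^{***}$. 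This observation ties the two halves together and completes the proof.
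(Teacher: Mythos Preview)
Your argument is correct and matches the paper's approach exactly: the paper proves Theorem~\ref{T2} simply by writing ``Hence we have the following'' after Theorem~\ref{T10} and the biduality remark for ideals, i.e., by combining Oja's characterization property-$SU$ $\Leftrightarrow$ property-$U$ $+$ ideal with Theorem~\ref{T10} and the general fact that $Y$ is an ideal in $X$ iff $Y^{\perp\perp}$ is an ideal in $X^{**}$. Your final paragraph on $Y^{\#}$ and the $\ell_1$-decomposition is extra commentary that the paper does not include; it is not needed for the proof, and the surjectivity claim there (that the canonical embedding sends $Y^{\#}$ \emph{onto} $(Y^{\perp\perp})^{\#}$) would itself require the property-$SU$ conclusion, so you should drop it rather than present it as part of the argument.
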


We now show that the condition of finite codimensionality can not be omitted in Theorem~\ref{T2}.

\begin{ex}\label{Ex1}
	There exists a one-dimensional subspace of $C[0,1]$ which has property-$U$ but does not have property-$U$ in $C[0,1]^{**}$.
	Let \[
	f(t) =
	\begin{cases}
	2t & \text{if $0\leq t \leq \frac{1}{2}$} \\
	2(1-t) & \text{if $\frac{1}{2}\leq t \leq 1$},
	\end{cases}
	\] $f$ is the smooth point of $C[0,1]$ and $\delta_{\frac{1}{2}} \in M[0,1]$ is the unique linear functional which attains its norm at $f\in C[0,1]$. Hence $Y=\text{span}\{f\}$ has property-$U$ in $C[0,1]$.
	
	Now suppose $Y$ has property-$U$ in $C[0,1]^{**}$, from the assumption, it is easy to see that $\delta_{\frac{1}{2}}$ is a point of continuity of identity map $I:(B_{X^*},w^*)\to (B_{X^*},w)$, where $X=C[0,1]$. Let $t_n \to \frac{1}{2}$, $t_n \neq \frac{1}{2}$, for all $n\in \mathbb{N}$, hence $f(t_n)\to f(\frac{1}{2})$, for all $f\in C[0,1]$, i.e $\delta_{t_n} \to \delta_{\frac{1}{2}}$ in weak* topology. Since identity map is weak*-weak continuous at the point $\delta_{\frac{1}{2}}$, $\delta_{t_n} \to \delta_{\frac{1}{2}}$ in weak topology, i.e $\delta_{\frac{1}{2}}\in \overline{\text{conv}}^w\{\delta_{t_n}:n\in \mathbb{N}\}$, hence $\delta_{\frac{1}{2}}\in \overline{\text{conv}}^{\|.\|}\{\delta_{t_n}:n\in \mathbb{N}\}$. Hence there exists a sequence $(\mu_n) \ci \text{conv}\{\delta_{t_n}:n\in \mathbb{N}\}$ such that $\mu_n \to \delta_{\frac{1}{2}}$. Thus there exists $n_0 \in \mathbb{N}$ such that $\|\mu_{n_0}-\delta_{\frac{1}{2}}\| < \e$, hence $\|\lambda \delta_{t_{n_1}}+(1-\lambda)\delta_{t_{n_2}}-\delta_{\frac{1}{2}}\| < \e$, for some $\la \in [0,1]$ and $n_1, n_2 \in \mathbb{N}$, but $\|\lambda \delta_{t_{n_1}}+(1-\lambda)\delta_{t_{n_2}}-\delta_{\frac{1}{2}}\|=2$, hence contradiction.
\end{ex}

Let $X$ be a Banach space. It is known that for any two $M$-ideals, $Y,Z \ci X$, the sum space $Y+Z$ is a closed $M$-ideal in $X$. See \cite[Proposition~I.1.11.]{HW}. We next show that the validity of the same question for property-$SU$, determines a Hilbert space in the class of Banach spaces which are smooth. Note that a Hilbert space of dimension bigger than $1$, has no non-trivial $M$-ideals, where as,  in any Hilbert space, closed subspaces have property-$SU$

\begin{thm}
	Let $X$ be a smooth, Banach space of dimension $>3$. Suppose for every $Y,Z \ci X$ subspaces having property-$SU$ in $X$ and with sum $Y+Z$ is closed, $Y+Z$ also has property-$SU$.
	Then $X$ is isometric to a Hilbert space.
\end{thm}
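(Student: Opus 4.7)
The plan is to use the sum-stability hypothesis to promote the (almost trivial) fact that every one-dimensional subspace of a smooth space has property-$SU$ to the stronger conclusion that every two-dimensional subspace of $X$ is $1$-complemented, and then to invoke the classical Kakutani--Bohnenblust characterization of Hilbert spaces.

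First I would verify that every one-dimensional subspace $Y=\mb{R}y$, with $y\in S_X$, has property-$SU$. Property-$U$ is immediate from smoothness: the unique supporting functional at $y$ is the only norm-preserving extension (up to scaling) of the canonical functional on $Y$. For the ideal property, which in fact holds in every Banach space, pick $f\in S_{X^*}$ with $f(y)=1$ and set $P\colon X^*\to X^*$, $P(g)=g(y)\,f$. Then $P$ is a linear projection with $\ker P=Y^\perp$, and $\|P(g)\|=|g(y)|\le\|g\|$ with equality at $g=f$ gives $\|P\|=1$. By Oja's equivalence recalled in Section~1, $Y$ has property-$SU$.

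Next I would apply the sum hypothesis. Given any two-dimensional $W\ci X$, pick linearly independent $y_1,y_2\in W$ and set $Y_i=\mb{R}y_i$; by the previous step each $Y_i$ has property-$SU$, and $Y_1+Y_2=W$ is finite-dimensional, hence closed. The hypothesis gives that $W$ has property-$SU$, so in particular $W$ is an ideal in $X$. Since $W$ is finite-dimensional, $W^{\perp\perp}=W\ci X^{**}$, and the adjoint of the ideal projection $P\colon X^*\to X^*$ is a projection $P^*\colon X^{**}\to X^{**}$ of norm one with range exactly $W^{\perp\perp}=W$; its restriction to $X$ is therefore a norm-$1$ projection $X\to W$. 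Thus every two-dimensional subspace of $X$ is $1$-complemented.

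Finally, since $\dim X\ge 3$, the Kakutani--Bohnenblust theorem---a Banach space of dimension at least three is a Hilbert space if and only if every two-dimensional subspace is the range of a norm-$1$ projection---yields that $X$ is isometric to a Hilbert space. The only step requiring any care is the passage from \emph{ideal} to \emph{$1$-complemented} for the finite-dimensional $W$; after that the conclusion is immediate. The real content of the theorem is the first observation, that smoothness alone already forces every one-dimensional subspace to have property-$SU$, so that the sum axiom instantly cascades into the hypotheses of Kakutani's theorem.
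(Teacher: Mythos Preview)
Your proof is correct and follows essentially the same route as the paper: smoothness gives property-$SU$ for one-dimensional subspaces, the sum hypothesis then forces every two-dimensional subspace to be a norm-$1$ complemented ideal, and Kakutani's theorem finishes the argument. The only cosmetic difference is that the paper inserts an induction to conclude that \emph{every} finite-dimensional subspace is $1$-complemented before invoking Kakutani, whereas you stop at dimension two and use the sharper Kakutani--Bohnenblust form directly; your extra details (the explicit projection $g\mapsto g(y)f$ and the passage from ideal to $1$-complemented via $P^*|_X$ when $W$ is finite-dimensional) are welcome clarifications of steps the paper leaves implicit.
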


\begin{proof}
	We note that since $X$ is a smooth space, any one dimensional subspace of $X$ has property-$SU$ in $X$. Thus by our hypothesis any two dimensional space has property-$SU$,
	so it has property-$U$ and is the range of a projection of norm-1. Now by  induction, every finite dimensional subspace has the property-$SU$.
	Thus  any finite dimensional subspace is the range of a projection of norm one. Now by Kakutani's theorem (See \cite[Pg.150]{HE}) we get that $X$ is a Hilbert space.
\end{proof}

\section{A few examples}

In this section several examples are given satisfying properties $U$ and $SU$. Our first example characterizes finite co-dimensional subspaces of $c_0$ with property-$U$.

Let us recall that, $f\in c_0^*$ where $ker (f)$ is proximinal if and only if $f$ has only finite support and if $Y\ci X$ is a proximinal subspace of finite co-dimensional and $Y\ci Z\ci X$ then $Z$ is proximinal in $X$(see \cite{IS}).

\begin{prop}
	Let $Y \ci c_0$ be a proximinal subspace of finite codimension. $Y$ has  property-$U$ in $c_0$ if and only if $c_0/Y$ is isometric to a subspace, with property-$U$, of $\ell_\iy(k)$ for some integer $k\leq dim (c_0/Y)$.
\end{prop}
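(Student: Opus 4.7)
The plan is to use the recalled facts about proximinal finite-codimensional subspaces of $c_0$ to put $Y^\perp$ into a concrete finite-dimensional framework, translate property-$U$ into a Chebyshev condition there, and then identify the desired subspace of $\ell_\iy(k)$ via duality.

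Since $Y$ is proximinal of finite codimension $n = \dim(c_0/Y)$, the recalled results force $Y = \bigcap_{i=1}^n \ker f_i$ with each $f_i \in \ell_1 = c_0^*$ finitely supported. Setting $k := \big|\bigcup_i \operatorname{supp}(f_i)\big|$, the annihilator $Y^\perp = \operatorname{span}\{f_1,\dots,f_n\}$ is an $n$-dimensional subspace of $\ell_1(k) \ci \ell_1$. The first-$k$-coordinate projection $\pi_k \colon c_0 \to \ell_\iy(k)$ has kernel contained in $Y$ (since every $f_i$ vanishes outside $\{1,\dots,k\}$), so it factors through $c_0/Y$ and yields an isometric identification $c_0/Y \cong \ell_\iy(k)/W$, where $W := \pi_k(Y) = \bigcap_i \ker\!\bigl(f_i|_{\ell_\iy(k)}\bigr)$. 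Under this identification, $W^\perp \ci \ell_1(k) = \ell_\iy(k)^*$ coincides with $Y^\perp$.

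The crucial step is the equivalence \emph{$Y^\perp$ is Chebyshev in $\ell_1$ if and only if $Y^\perp$ is Chebyshev in $\ell_1(k)$}. For any $g \in \ell_1$, decompose $g = g' + g''$ with $g' \in \ell_1(k)$ and $g''$ supported on $\{k{+}1, k{+}2,\dots\}$; since $Y^\perp \ci \ell_1(k)$, one has $\|g-h\|_1 = \|g'-h\|_1 + \|g''\|_1$ for every $h \in Y^\perp$, so best approximations to $g$ from $Y^\perp$ in $\ell_1$ correspond bijectively with best approximations to $g'$ from $Y^\perp$ in $\ell_1(k)$, and uniqueness transfers. Combining with the standard fact that a subspace has property-$U$ iff its annihilator is Chebyshev in the dual, one obtains
\[
Y \text{ has property-}U \text{ in } c_0 \iff W \text{ has property-}U \text{ in } \ell_\iy(k),
\]
and together with $c_0/Y \cong \ell_\iy(k)/W$ this will deliver both directions of the biconditional; the converse runs through the same chain in reverse, transporting the property-$U$ hypothesis in $\ell_\iy(k)$ back to Chebyshev-ness in $\ell_1$ and hence to property-$U$ of $Y$.

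The main obstacle I anticipate is interpretational rather than technical: the construction naturally presents $c_0/Y$ as a \emph{quotient} of $\ell_\iy(k)$ by the property-$U$ subspace $W$ (with $\dim W = k-n$), whereas the proposition phrases the conclusion in terms of $c_0/Y$ being isometric to a property-$U$ \emph{subspace} of $\ell_\iy(k)$ for an integer $k$ compared against $n = \dim(c_0/Y)$. Reconciling the two viewpoints will require producing an explicit subspace $V \ci \ell_\iy(k)$ isometric to $c_0/Y$ that inherits property-$U$ from $W$ via the duality $\ell_\iy(k)^* = \ell_1(k)$, and then verifying the stated bound on $k$ by choosing a minimal-support basis of $Y^\perp$ (equivalently, counting the extreme points of $B_{Y^\perp}$).
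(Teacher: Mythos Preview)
Your core argument is correct and is precisely what the paper does. The paper writes $Y = F \oplus_\infty \{y\in c_0: y(j)=0 \text{ for } j\le k\}$ and $c_0 = \ell_\infty(k)\oplus_\infty(\text{tail})$, and then asserts (with no further detail) that $F$ has property-$U$ in $\ell_\infty(k)$ if and only if $Y$ has property-$U$ in $c_0$; your $W$ is exactly the paper's $F$, and your Chebyshev argument via the $\ell_1$-decomposition $\|g-h\|_1=\|g'-h\|_1+\|g''\|_1$ supplies the justification the paper omits. In fact the paper's written proof only addresses one implication explicitly, whereas your argument covers both.

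Your interpretational worry is entirely justified, and you should not try to resolve it. The paper's own proof does \emph{not} establish the proposition as literally stated: it exhibits $F$ (your $W$) as the property-$U$ subspace of $\ell_\infty(k)$, but $F$ has dimension $k-\dim(c_0/Y)$ and is not isometric to $c_0/Y$ in general. Moreover the inequality $k\le \dim(c_0/Y)$ cannot be right (consider $Y=\ker(1/2,1/2,0,0,\ldots)$: then $\dim(c_0/Y)=1$, so the stated condition forces $k=1$ and is vacuously satisfied, yet $Y$ fails property-$U$). The natural bound is $k\ge \dim(c_0/Y)$, and the intended content is the quotient identification $c_0/Y\cong \ell_\infty(k)/F$ with $F$ a property-$U$ subspace of $\ell_\infty(k)$---exactly what you proved. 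Do not pursue the ``minimal-support basis'' idea to force $k\le\dim(c_0/Y)$; that bound is simply unattainable in general.
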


\begin{proof}
	Since $Y$ is proximinal and finite codimensional, one has,
	$Y^\bot = span\{f_1,...f_m\}$ for some $f_i \in \ell_1$, each having only finitely many non-zero coordinates. Thus we may assume for some $k>0$, $f_i(j)=0$ for $j>k$ and $1\leq i \leq m$. It is easy to see now, $Y = F \bigoplus_{\iy}\{y \in c_0:y(j)=0 \mbox{~for~all}~j\leq k\}$, for some finite dimensional subspace of $c_0$. By hypothesis $F$ is a subspace with property-$U$ of $\ell_\iy(k)$. Since $c_0 =\ell_\iy(k) \bigoplus_{\iy} \{x \in c_0: x(j)=0\mbox{~for~all~}j\leq k\}$. $Y$ is a subspace with property-$U$ of $c_0$.
\end{proof}

In the next result, we show the dimension $n$ plays an important role for having property-$U$ of the subspace  $Y=\ker\{(1,1,\ldots, 1)\}\ci (\mathbb{R}^n, \|.\|_\iy)$.

\begin{prop} \label{P1}
	Let $Y=\{(y_1,\, y_2,\ldots,y_n):y_1+y_2+\dots+y_n=0\}$ be a subspace of $(\mathbb{R}^n,\|.\|_\iy)$. Then
	\bla
	\item $Y$ has property-$U$ in $(\mathbb{R}^n,\|.\|_\iy),$ when $n$ is odd.
	\item $Y$ does not have property-$U$ in $(\mathbb{R}^n,\|.\|_\iy),$ when $n$ is even.
	\el
\end{prop}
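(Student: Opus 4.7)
The plan is to reduce property-$U$ for this hyperplane to a uniqueness question about best approximation by a line in $\ell_1^n$. Since $Y=\ker f$ has codimension $1$ in $X=(\mb{R}^n,\|\cdot\|_\iy)$, where $f=(1,1,\ldots,1)\in X^*=(\mb{R}^n,\|\cdot\|_1)$, for any $\phi\in Y^*$ and any fixed Hahn-Banach extension $g\in X^*$, every extension of $\phi$ has the form $g-\la f$, $\la\in\mb{R}$, and the norm-preserving extensions are exactly the minimizers of
\[
h_g(\la)=\|g-\la f\|_1=\sum_{i=1}^n|g_i-\la|.
\]
Thus $Y$ has property-$U$ in $X$ if and only if for every $g\in\mb{R}^n$ the convex, piecewise-linear function $h_g$ attains its minimum at a unique $\la\in\mb{R}$.

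For part (a), with $n$ odd, I would exploit the explicit derivative of $h_g$ off the finite set $\{g_1,\ldots,g_n\}$: at such $\la$,
\[
h_g'(\la)=\#\{i:g_i<\la\}-\#\{i:g_i>\la\}.
\]
Since these two counts are non-negative integers summing to the odd number $n$, they are never equal, so $h_g'$ is nowhere zero between consecutive $g_i$'s. Consequently $h_g$ is strictly decreasing on $(-\iy,m)$ and strictly increasing on $(m,\iy)$, where $m$ is the unique median of $g_1,\ldots,g_n$, giving uniqueness of the minimizer and hence property-$U$.

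For part (b), with $n=2m$ even, the plan is to exhibit a single explicit counterexample. Take $g=(0,\ldots,0,1,\ldots,1)\in\mb{R}^n$ with $m$ zero coordinates followed by $m$ one coordinates. For any $\la\in[0,1]$ one has $h_g(\la)=m\la+m(1-\la)=m$, so the minimum value $m$ is attained on the entire interval $[0,1]$. In particular $g$ and $g-f=(-1,\ldots,-1,0,\ldots,0)$ are distinct elements of $X^*$ of equal norm $m$ whose restrictions to $Y$ coincide, producing two distinct norm-preserving extensions of $g|_Y$. The only routine step is the identity $\|\phi\|=d(g,\mb{R}f)=\min_\la\|g-\la f\|_1$, justifying the reduction in the first paragraph; I do not expect any serious obstacle, as the heart of the argument is the parity observation in (a) and the explicit configuration in (b).
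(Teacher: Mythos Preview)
Your proposal is correct and follows essentially the same route as the paper: both reduce property-$U$ of $Y$ to the Chebyshev property of $Y^\perp=\mathrm{span}\{(1,\ldots,1)\}$ in $(\mb{R}^n,\|\cdot\|_1)$, and both identify the median of the coordinates as the relevant best approximation. The only cosmetic differences are that in (a) you supply the parity/derivative argument where the paper simply asserts the median is the unique minimizer, and in (b) you give one explicit configuration while the paper notes that for a general vector any $\beta$ between the two middle order statistics is a best approximation.
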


\begin{proof}
	{\sc Case~1:} When $n$ is odd. Let us assume $n=2k+1$.
	
	It remains to prove that $span\{{\sc 1}\}$ is a Chebyshev subspace of $(\mb{R}^n,\|.\|_1)$. Then for any $(x_1,x_2,\ldots,x_n)\in \mb{R}^n$, define $\al_1,\al_2,\ldots,\al_n$ such that $\{x_i:1\leq i\leq n\}=\{\al_i:1\leq i\leq n\}$ and $\al_1\leq\al_2\leq\ldots\leq\al_n$. Then $(\al_0,\al_0,\ldots,\al_0)\in \mb{R}^n$ is the unique best approximation of $(x_1,x_2,\ldots,x_n)$, where $\al_0=\al_{k+1}$.
	
	{\sc Case~2:} When $n$ is even. Let us assume $n=2k$.
	
	Suppose that $(x_1,x_2,\ldots,x_n)$ and $(\al_1,\al_2,\ldots,\al_n)$ stand with the similar meaning as {\sc Case~1}. Then for any $\be\in [\al_k,\al_{k+1}]$, $(\be,\be,\ldots,\be)\in\mb{R}^n$ is a best approximation from $(x_1,x_2,\ldots,x_n)$ to $span\{1\}$.
\end{proof}

An obvious question is to consider property-$SU$ for the subspace $Y$ in $(\mb{R}^3, \|.\|_\iy)$. It was shown in \cite{O} that when $n=3$, $Y$ fails to have property-$SU$ in $(\mb{R}^3, \|.\|_\iy)$, where $Y=\{(x,y,z)\in \mathbb{R}^3:x+y+z=0\}\ci (\mathbb{R}^3,\|.\|_\iy)$. We derive a similar conclusion in $(\mathbb{R}^3,\|.\|_1)$.

\begin{prop} Let $Y=\{(x,y,z)\in \mathbb{R}^3:x+y+z=0\}\ci (\mathbb{R}^3,\|.\|_1)$, then $Y$ does not have property-$SU$ in $(\mathbb{R}^3,\|.\|_1)$.
\end{prop}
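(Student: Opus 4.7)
My plan is to show that $Y$ is not an ideal in $X=(\mb{R}^3,\|\cdot\|_1)$; by the Oja characterization recalled in Section~1 (property-$SU$ is equivalent to property-$U$ together with being an ideal), this suffices to conclude that $Y$ fails property-$SU$.

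The dual space is $X^*=(\mb{R}^3,\|\cdot\|_\iy)$ and $Y^\perp=\mb{R}(1,1,1)$ is one-dimensional, so any linear projection $P$ on $X^*$ with $\ker P=Y^\perp$ has the form $P(x)=x-\phi(x)(1,1,1)$, where $\phi(x)=\al x_1+\be x_2+\ga x_3$ is a linear functional normalized by $\al+\be+\ga=1$. My first step is to evaluate $P$ on the unit vector $(1,-1,0)$: the first two coordinates of its image are $1-(\al-\be)$ and $-1-(\al-\be)$, so the requirement $\|P(1,-1,0)\|_\iy\leq 1$ forces $\al=\be$. Testing on $(1,0,-1)$ yields the symmetric conclusion $\al=\ga$, and together with $\al+\be+\ga=1$ this singles out the averaging projection $\phi(x)=(x_1+x_2+x_3)/3$ as the only candidate for a norm-one projection with kernel $Y^\perp$.

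The second step is to test this candidate at $x=(1,-1,-1)$, a unit vector whose coordinate sum is nonzero so that the averaging correction is nontrivial: a direct computation gives $\phi(x)=-\tfrac{1}{3}$ and $P(x)=(\tfrac{4}{3},-\tfrac{2}{3},-\tfrac{2}{3})$, whose $\ell_\iy$-norm is $\tfrac{4}{3}>1=\|x\|_\iy$. This contradicts $\|P\|=1$, so no norm-one projection with kernel $Y^\perp$ exists, $Y$ is not an ideal in $X$, and property-$SU$ fails. The main obstacle is locating the right witness in this second step: the sign constraints from the first step pin down the averaging projection as the only remaining candidate, so the whole argument hinges on finding a point at which it is expansive; one wants $x_1+x_2+x_3\neq 0$ together with two coordinates of sign opposite to the dominant one, so that the correction $-\phi(x)(1,1,1)$ amplifies rather than cancels the largest coordinate.
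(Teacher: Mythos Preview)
Your argument is correct, but it proceeds quite differently from the paper's proof. The paper first establishes that $Y$ \emph{does} have property-$U$, by showing that $Y^\perp=\mathrm{span}\{(1,1,1)\}$ is Chebyshev in $(\mb{R}^3,\|\cdot\|_\iy)$ with explicit nearest-point formula $\frac{x\vee y\vee z+x\wedge y\wedge z}{2}(1,1,1)$; it then computes $Y^{\#}$ explicitly by evaluating functionals on $ext(B_Y)=\{\pm(\tfrac12,-\tfrac12,0),\pm(\tfrac12,0,-\tfrac12),\pm(0,\tfrac12,-\tfrac12)\}$, obtains $Y^{\#}=\{(a,b,z),(a,z,b),(z,a,b):a+b=0,\ z\in\mb{R}\}$, and exhibits $u=(-1,1,0),\ v=(0,1,-1)\in Y^{\#}$ with $u+v=(-1,2,-1)\notin Y^{\#}$, so $Y^{\#}$ is not a linear subspace and property-$SU$ fails. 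Your route bypasses both the Chebyshev verification and the description of $Y^{\#}$: you parametrize all projections on $X^*$ with kernel $Y^\perp$, use the test vectors $(1,-1,0)$ and $(1,0,-1)$ to force the averaging projection as the only contractive candidate, and then kill it with $(1,-1,-1)$; via the Oja characterization this shows $Y$ is not an ideal, hence not $SU$. Your argument is shorter and more direct for the stated proposition, while the paper's argument yields the finer information that $Y$ has property-$U$ and gives the full structure of $Y^{\#}$, locating the failure of $SU$ precisely in the non-linearity of the set of norm-preserving extensions.
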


\begin{proof}
	It is clear that $Y^\perp= span\{(1,1,1)\}$ is a Chebyshev subspace of $(\mathbb{R}^3,\|.\|_\infty)$. Indeed, for $(x,y,z)\in \mathbb{R}^3$, $d((x,y,z),Y^\perp)=\frac{x\vee y\vee z -x\wedge y\wedge z}{2}$ and unique nearest point of $(x,y,z)$ from $Y^\perp$ is $\frac{x\vee y\vee z +x\wedge y\wedge z}{2}(1,1,1)$.
	
	It remains to prove that $Y^\#$ is not a linear subspace. one can check $ext(B_Y)=\{\pm(\frac{1}{2},-\frac{1}{2},0),\pm(\frac{1}{2},0,-\frac{1}{2}),\pm(0,\frac{1}{2},-\frac{1}{2})\}$.
	
	Let $f=(l,m,n)\in Y^{\#}$. Then
	$$\|f\|=|l|\vee|m|\vee|n|=\ds\max_{(x,y,z)\in ext(B_Y)}|lx+my+nz|=\|f|_Y\|.$$
	
	We consider the following to evaluate $Y^{\#}$.
	
	It is known that $|l|\vee |m|=|\frac{l+m}{2}|+|\frac{l-m}{2}|$, for any two real scalars $l, m$.
	
	Let the maximum is attained at $(x,y,z)=\pm(\frac{1}{2},-\frac{1}{2},0)$, then $|l|\vee|m|\vee|n|=|\frac{l-m}{2}|=|l|\vee|m|-|\frac{l+m}{2}|$.
	
	{\sc Case 1:~} If $|l|\vee|m|\leq |n|$ then $ |n|=|l|\vee|m|-|\frac{l+m}{2}|\Ra l+m=0.$
	
	{\sc Case 2:~} If $|l|\vee|m|\geq |n|$ then $|l|\vee|m|=|l|\vee|m|-|\frac{l+m}{2}|\Ra l+m=0$.
	
	Similarly, if the maximum is attained at $(\pm(\frac{1}{2},0,-\frac{1}{2})$ and $\pm(0,\frac{1}{2},-\frac{1}{2})$, we get $l+n=0$ and $m+n=0$ respectively.

	Hence $Y^{\#}=\{(a,b,z),(a,z,b),(z,a,b):a+b=0, z\in \mathbb{R}\}$. Choose $u=(-1,1,0)\in Y^{\#}\,\&\, v=(0,1,-1)\in Y^{\#}$ but $u+v=(-1,2,-1)\notin Y^{\#}$. Therefore $Y$ does not have property-$SU$ in $(\mathbb{R}^3,\|.\|_\iy)$.
\end{proof}

Our next Theorem identifies hyperplanes $Y$ in $c_0$ with property-$SU$.
\begin{thm}\label{T4}
	Let $(a_n)\in S_{\ell_1}$ and $\sup_{n\in \mathbb{N}}|a_n|>\frac{1}{2}$. Then $\ker\{(a_n)\}\ci c_0$ has property-$SU$.
\end{thm}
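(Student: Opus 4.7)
The strategy is to construct an explicit complement to $Y^{\perp}$ in $c_0^{*}=\ell_1$ that witnesses property-$SU$ directly from the definition in Section~1. Since $Y=\ker\{a\}$ is a hyperplane of $c_0$ determined by $a=(a_n)\in\ell_1$, one has $Y^{\perp}=\mathbb{R}\cdot a$. I will use the hypothesis to fix an index $n_0$ with $|a_{n_0}|>1/2$, and take $G:=\{g\in\ell_1:g_{n_0}=0\}$ as the candidate complement. The task then reduces to exhibiting the projection $P$ of $\ell_1$ onto $G$ along $\mathbb{R}\cdot a$ and checking that it is norm-decreasing, strictly so whenever the $Y^{\perp}$-component is non-zero.

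Concretely I plan to set $Pf:=f-(f_{n_0}/a_{n_0})\,a$, so that $(Pf)_{n_0}=0$, $Pa=0$, and $P^{2}=P$; this realises every $f\in\ell_1$ as a unique sum $f=Pf+\lambda a$ with $\lambda=f_{n_0}/a_{n_0}$. In particular the $Y^{\perp}$-summand is non-zero precisely when $f_{n_0}\neq 0$. A single application of the triangle inequality, using $\sum_{n\neq n_0}|a_n|=1-|a_{n_0}|$, gives
\[
\|Pf\|_1 \;\le\; \sum_{n\neq n_0}|f_n|+\frac{|f_{n_0}|}{|a_{n_0}|}\bigl(1-|a_{n_0}|\bigr) \;=\; \|f\|_1-\frac{2|a_{n_0}|-1}{|a_{n_0}|}\,|f_{n_0}|,
\]
and because $|a_{n_0}|>1/2$ the subtracted term is strictly positive whenever $f_{n_0}\neq 0$. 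Hence $\|f\|_1>\|Pf\|_1$ exactly when the $Y^{\perp}$-component is non-zero, which is precisely the condition defining property-$SU$.

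The only real insight is the choice of a single-coordinate ``zeroing'' projection; once this is on the table, the estimate is essentially one line of triangle inequality. The role of the numerical threshold $1/2$ is transparent: the normalisation $\|a\|_1=1$ forces $1-|a_{n_0}|$ on the off-coordinate mass, and exactly when $|a_{n_0}|>1/2$ does the factor $(1-|a_{n_0}|)/|a_{n_0}|$ drop below $1$, making $P$ a contraction that is strict on $\mathbb{R}\cdot a$. No deeper machinery (Hahn--Banach, tensor products, or the ideal characterisations used earlier in the paper) enters the argument.
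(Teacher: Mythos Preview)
Your proof is correct. Both you and the paper fix the index $n_0$ with $|a_{n_0}|>\tfrac12$ and take the same complement $G=\ker(e_{n_0})$, but the verifications diverge. The paper proceeds by showing $G=Y^{\#}$: for $G\subseteq Y^{\#}$ it constructs explicit near-norming vectors $x^m\in B_Y$, and for $Y^{\#}\subseteq G$ it bounds $|x_{n_0}|$ uniformly below $1$ on $B_Y$; it also appeals to \cite[Theorem~6.1]{BP} to record that $Y$ is $1$-complemented. You bypass the identification of $Y^{\#}$ entirely and instead verify the definition of property-$SU$ directly, obtaining the quantitative contraction estimate $\|Pf\|_1\le \|f\|_1-\dfrac{2|a_{n_0}|-1}{|a_{n_0}|}\,|f_{n_0}|$ from a single triangle inequality. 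Your route is shorter and self-contained (no external citation needed), while the paper's argument yields the extra information that $Y^{\#}$ is exactly the coordinate hyperplane $\ker(e_{n_0})$, a description that is used informally in the surrounding discussion.
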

\begin{proof}
	Let us first observe that $ker (a_n)$ is 1-complemented in $c_0$ (see \cite[Theorem~6.1]{BP}) hence is an ideal in $c_0$.
	
	Let $X=c_0$, $Y=\ker \{(a_n)\}$. Since $\ds\sup_{n\in \mathbb{N}}|a_n|>\frac{1}{2}$, there exist $N\in \mathbb{R}$ such that $|a_N|>\frac{1}{2}$. Note that, the $N$ is unique, as $(a_n)\in S_{\ell_1}$. Let $G=\ker\{e_N\}$. It is clear that $G$ is the complement of $Y^{\perp}=\text{span}\{(a_n)\}$ in  $\ell_1$.
	
	It is enough to prove that $G=Y^\#$.
	
	Let $Y=\ker (a_n)\ci c_0$, where $(a_n)\in S_{\ell_1}$, $|a_N|>\frac{1}{2}$ and $G=\ker (e_N)\ci \ell_1$. Note that for $(b_n)\in G$, $b_N=0$.
	
	Now $\|(b_n)|_Y\|=\ds\sup_{(x_n)\in B_Y}\left|\sum_{n\in \mathbb{N}} b_nx_n\right|\leq \sum_{n\in \mathbb{N}} |b_n|$. For $m>N$, define $x^m$ by,
	\begin{equation*}
	x_n^m = \begin{cases}
	\text{sign } (b_n) & \text{if } n\neq N,\& ~n\leq m,\\
	-\frac{\sum_{n=1, n\neq N}^{m} \left(a_n\text{sign }(b_n)\right) }{a_N} & \text{if } n=N,\\
	0  & \text{if } n>m.
	\end{cases}
	\end{equation*}
	It is clear that $x^m\in B_Y$ and $(b_n)(x^m)=\ds\sum_{n=1}^{m} |b_n|$.
	
	Now $|(b_n)(x^m)|\ra \ds\sum_{n\in \mathbb{N}}|b_n|$ as $m\ra \iy$ and hence $\|(b_n)|_Y\|=\ds\sum_{n\in \mathbb{N}} |b_n|=\|(b_n)\|_1$.
	
	Let $(b_n)\in Y^\#$ and  $\delta>0$ such that, $\ds\sum_{n\in \mathbb{N}\setminus \{N\}}|a_n|<|a_N|-\delta$.
	
	{\sc Claim~:} If $(x_n)\in B_Y$ then $|x_N|<1-\frac{\delta}{|a_N|}$.
	
	Suppose not, i.e $|x_N|\geq 1-\frac{\delta}{|a_N|} $, hence we have $\ds\sum_{n\in \mathbb{N}\setminus \{N\}}|a_n|\geq \left|\sum_{n\in \mathbb{N}\setminus \{N\}}a_nx_n\right|=|x_Na_N|\geq |a_N|(1-\frac{\delta}{|a_N|})=|a_N|-\delta$ contradiction. Now $\|(b_n)\|_1=\ds\sum_{n\in \mathbb{N}}|b_n|=\|(b_n)|_Y\|=\ds \sup_{(x_n)\in B_Y}\ds\sum_{n\in \mathbb{N}}| b_nx_n|\leq \ds\sum_{n\in \mathbb{N}\setminus \{N\}}|b_n|+(1-\frac{\delta}{|a_N|})|b_N|$ implies $\frac{\delta}{|a_N|}|b_N|\leq 0$ i.e $b_N=0$. Hence $(b_n)\in G$.
	
	This completes the proof.
\end{proof}

\begin{ex}\label{Ex2}
	Let $Y_1=\{(x,y,z):x+y+6z=0\}$ and $Y_2=\{(x,y,0):x,y\in \mathbb{R}\}$ be two subspaces of $X=(\mathbb{R}^3,\|.\|_\iy)$. We know that both $Y_1$ and $Y_2$ have property-$SU$ in $X$. Note that $Y_1$ has property-$SU$ in $X$ and $Y_2$ is an M-summand. It is clear that $Y_1\cap Y_2=\{(x,y,0):x+y=0\}$ (=$Z$ say) where $Z^{\perp}=\{(s,s,r):r, s\in\mb{R}\}$, is not a Chebyshev subspace in $X^*$. Hence $Y_1\cap Y_2$ does not have property-$U$ in $X$.
\end{ex}

We now conclude from the above example that, the assertion in Theorem~\ref{T4} is not sufficient to conclude property-$SU$ for finite codimensional ($> 1$) subspaces of $c_0$. Note that $c_0\cong (\mb{R}^n, \|.\|_\iy)\bigoplus_{\ell_\iy} c_0$ and so its dual is $(\mb{R}^n, \|.\|_1) \bigoplus_{\ell_1} \ell_1$, for any natural number $n$. Hence from Example~\ref{Ex2} it is clear that $ker (\frac{1}{8},\frac{1}{8},\frac{3}{4},0,0,\ldots) \bigcap ker (e_3)$ does not have property-$U$ in $c_0$.

The results in Theorem~\ref{T4}, \cite[Theorem~6.1]{BP} and the subsequent discussion lead to the following conclusion.

\begin{cor}
	Let $(a_n)\in S_{\ell_1}$ then $ker (a_n)$ has property-$SU$ in $c_0$ if and only if $\sup_{n\in\mb{N}} |a_n|>\frac{1}{2}$.
\end{cor}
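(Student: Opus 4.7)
The forward direction, $\sup_n|a_n|>1/2\Rightarrow$ property-$SU$, is exactly Theorem~\ref{T4}. For the converse I argue the contrapositive: assume $\sup_n|a_n|\le 1/2$ and show $\ker(a_n)$ does not have property-$SU$ in $c_0$. I split into the strict and the equality subcases.

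If $\sup_n|a_n|<1/2$, then by \cite[Theorem~6.1]{BP}---the same characterization invoked in the proof of Theorem~\ref{T4} to produce a norm-$1$ projection---$\ker(a_n)$ is not $1$-complemented in $c_0$; for a hyperplane of $c_0$ this is equivalent to failing the ideal property, so property-$SU$ is ruled out.

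If $\sup_n|a_n|=1/2$, I exhibit $b\in\ell_1$ with non-unique best approximation from $\mathrm{span}\{(a_n)\}$, so that property-$U$ (hence property-$SU$) fails. Since $(a_n)\in S_{\ell_1}$, the supremum is attained at some $N$ with $|a_N|=1/2$, and $\sum_{n\ne N}|a_n|=1/2$. For any $M\ge N$, set $b_N:=-\mathrm{sgn}(a_N)$, $b_n:=\mathrm{sgn}(a_n)$ whenever $n\le M$, $n\ne N$, $a_n\ne 0$, and $b_n:=0$ otherwise; the truncation puts $b\in\ell_1$. On $|t|\le 2$ the bound $|a_n|\le 1/2$ keeps all summands of fixed sign, namely $b_N-ta_N=-\mathrm{sgn}(a_N)(1+t|a_N|)$ and $b_n-ta_n=\mathrm{sgn}(a_n)(1-t|a_n|)$ for $n\ne N$, $n\le M$, $a_n\ne 0$. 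Writing $K$ for the number of such indices and $B:=\sum_{n>M}|a_n|$, and using the identity $|a_N|=\sum_{n\ne N}|a_n|=1/2$, one collapses the sum to
\[
g(t):=\|b-ta\|_1=(K+1)+tB+|t|B,
\]
which equals $K+1$ on $[-2,0]$ and $(K+1)+2tB\ge K+1$ on $[0,2]$. Hence the minimum of $g$ over $\mathbb{R}$ is attained throughout the interval $[-2,0]$, so the best approximation is not unique. The main obstacle is the sign bookkeeping behind the collapse of $g(t)$; once this is done, the two subcases assemble directly, using Theorem~\ref{T4} and \cite[Theorem~6.1]{BP}, into the stated equivalence.
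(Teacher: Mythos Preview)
Your argument follows the same skeleton as the paper's: sufficiency by Theorem~\ref{T4}, necessity by splitting into the strict case (not $1$-complemented via \cite[Theorem~6.1]{BP}) and the equality case (non-Chebyshev $Y^\perp$). Two small points are worth noting. First, in the $\sup_n|a_n|<\tfrac12$ case you assert that for a hyperplane of $c_0$, failing $1$-complementation is the same as failing the ideal property; this is true but not automatic, and the paper supplies the missing step by invoking \cite[Proposition~2]{RA} together with the fact that $c_0$ is $M$-embedded (so that an ideal in $c_0$ is already $1$-complemented). Second, in the $\sup_n|a_n|=\tfrac12$ case your computation of $g(t)$ is correct on $|t|\le 2$, and the passage to the global minimum over $\mb{R}$ is justified by the convexity of $t\mapsto\|b-ta\|_1$, which you should state. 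The paper reaches the same non-uniqueness conclusion with the much simpler test vector $b=e_N$: there $\|e_N-\al a\|_1=|1-\al a_N|+\tfrac12|\al|$ is constant (equal to $1$) on $\al\in[0,2]$ when $a_N=\tfrac12$ (respectively on $[-2,0]$ when $a_N=-\tfrac12$), avoiding all of your sign bookkeeping with $K$, $B$, and the truncation at $M$.
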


\begin{proof}
	It remains to prove that the condition is necessary.
	
	Since $c_0$ is an $M$-ideal in its bidual $\ell_\infty$ and $Y=\ker (a_n)$ is an ideal in $c_0$, by \cite[Proposition 2, Pg. 605]{RA} $Y$ is an $1$-complemented subspace. Hence $\ds\sup_{n\in \mathbb{N}}|a_n|\geq \frac{1}{2}$ follows from \cite[Theorem~6.1]{BP}.
	
	If possible assume that, $\sup_{n\in \mb{N}}|a_n|=\frac{1}{2}$ then there exist $N\in \mb{N}$ such that $|a_N|=\frac{1}{2}$.
	
	{\sc Case~1:} When $a_N=\frac{1}{2}$.
	
	Now $d(e_N,Y^\perp)=\inf_{\alpha\in \mathbb{R}}\{|1-\alpha\frac{1}{2}|+|\alpha|\sum_{n\in \mathbb{N}, n\neq N}|a_n|\}=\inf_{\alpha\in \mathbb{R}}\{|1-\alpha\frac{1}{2}|+|\alpha|\frac{1}{2}\}$.
	
	Let $d_\alpha=|1-\alpha\frac{1}{2}|+\frac{1}{2}|\alpha|$.

	{\sc Case~1.a:} Let $0\leq \alpha \leq 2$, $d_\alpha=1-\alpha\frac{1}{2}+\alpha\frac{1}{2}=1$.
	
	{\sc Case~1.b:} Let $\alpha>2$, $d_\alpha=\frac{1}{2}\alpha-1+\frac{1}{2}\alpha=\alpha-1>1$.
	
	{\sc Case~1.c:} Let $\alpha<0$, $d_\alpha=1-\frac{1}{2}\alpha-\frac{1}{2}\alpha=1-\alpha>1$.
	
	Therefore $d(e_N,Y^\perp)=1$ and for any $\alpha\in [0,2]$, $(\alpha. a_n)_{n=1}^\iy$ is the best approximation from $e_N$ to $Y^\perp$.
	
	{\sc Case~2:} When $a_N=-\frac{1}{2}$.
	
	Now $d(e_N,Y^\perp)=\inf_{\alpha\in \mathbb{R}}\{|1+\alpha\frac{1}{2}|+|\alpha|\sum_{n\in \mathbb{N}, n\neq N}|a_n|\}=\inf_{\alpha\in \mathbb{R}}\{|1+\alpha\frac{1}{2}|+|\alpha|\frac{1}{2}\}$.
	
	Let $d_\alpha=|1+\alpha\frac{1}{2}|+\frac{1}{2}|\alpha|$.

	{\sc Case~2.a:} Let $\alpha>0$, $d_\alpha=1+\alpha\frac{1}{2}+\alpha\frac{1}{2}=1+\alpha>1$.
	
	{\sc Case~2.b:} Let $-2\leq \alpha\leq 0$, $d_\alpha=\frac{1}{2}\alpha+1-\frac{1}{2}\alpha=1$.
	
	{\sc Case~2.c:} Let $\alpha<-2$, $d_\alpha=-1-\frac{1}{2}\alpha-\frac{1}{2}\alpha=-1-\alpha>1$.
	
	Therefore $d(e_N,Y^\perp)=1$, $(\alpha. a_n)_{n=1}^\iy$ is the best approximation of $e_N$ from $Y^\perp$, for $\alpha\in [-2,0]$.
	
	Hence in any case $Y^\perp$ can not be a Chebyshev subspace of $\ell_1$.
\end{proof}

\subsection*{Acknowledgements}
The research of the second author is supported by Science and Engineering Research Board, India, Award No. MTR/ 2017/ 000061. Corresponding author would like to thank SERB for their financial support.


\normalsize

\end{document}